\documentclass[a4paper,12pt]{article}
\usepackage{hyperref}
\usepackage{amsmath}
\usepackage{amssymb}
\usepackage{amsthm}
\usepackage{mathrsfs}
\usepackage{todonotes}
\usepackage{tikz}
\usepackage{enumitem}
\usetikzlibrary{arrows}
\usetikzlibrary{shapes}
\usepackage{subcaption}
\usepackage[backend=biber,style=numeric, maxbibnames=99]{biblatex}
\bibliography{RecombinationILSMv2}
\newtheorem{theorem}{Theorem}[section]
\newtheorem{lemma}[theorem]{Lemma}
\newtheorem{proposition}[theorem]{Proposition}
\newtheorem{definition}[theorem]{Definition}

\newtheorem{remark}[theorem]{Remark}

\newcommand{\EE}{\mathbb{E}}

\newcommand{\NN}{\mathbb{N}}

\newcommand{\PP}{\mathbb{P}}

\newcommand{\GG}{\mathbb{G}}
\newcommand{\RR}{\mathbb{R}}
\newcommand{\AS}{\mathbb{S}}

\newcommand{\A}{{\cal A}}
\newcommand{\B}{{\cal B}}

\newcommand{\F}{{\cal F}}
\newcommand{\G}{{\cal G}}

\newcommand{\aL}{{\cal L}}

\newcommand{\V}{{\cal V}}

\newcommand{\Y}{{\cal Y}}

\newcommand{\ds}{\displaystyle}
\newcommand{\lt}{\leadsto}
\begin{document}

\title{\itshape A probabilistic analysis of a continuous-time 
evolution in recombination.}
\author{Ian Letter and Servet Mart\'inez}

\maketitle

\begin{abstract}
We study the continuous-time evolution of the recombination equation of population genetics. This evolution is given by a differential equation that acts on a product probability space, and its solution can be described by a Markov chain on a set of partitions that converges to the finest partition. We study an explicit form of the law of this process by using a family of trees. We also describe the geometric decay rate to the finest partition and the quasi-limiting behaviour of the Markov chain when conditioned on the event that the chain does not hit the limit.
\end{abstract}

\medskip

\noindent {\bf Keywords: $\,$} Population genetics; Recombination; 
Partitions; Markov chain; Geometric decay rate; Trees;
Quasi-stationary distributions.  

\medskip

\noindent {\bf AMS Subject Classification:\,} 60J27; 92D25; 05C05.

\section{Introduction.} 
\label{intro}
Let $I$ be a finite set of sites, $(\A_i, \B_i)_{i \in I}$ be a family of measurable spaces, where $\B_i$ is the corresponding $\sigma$-algebra on $\A_i$ and $\mathcal{P}_I$ be the set of probability measures on the product measurable space $(\prod_{i\in I} \A_i, \otimes_{i \in I} \B_i)$. Here we study the evolution of the following ordinary differential equation, acting on $\mathcal{P}_I$:
\begin{equation} \label{introe} \begin{cases} \dot{\omega}_t = \ds \sum_{\delta \in \G} \rho_\delta ( R_\delta - 1) \omega_t, \\  \omega_0 = \mu. \end{cases} \end{equation}
Here $\G$ is a set of partitions of the set $I$, $\rho = (\rho_\delta)_{\delta \in \G}$ is a set of rates, which are non-negative numbers, $R_\delta(\mu) := \ds \otimes_{L \in \delta} \mu_L$ is the
product measure, and $\mu_J$ is the marginal of $\mu$ on $(\prod_{i\in J} A_i, \otimes_{i \in J} \B_i)$.

\medskip

This evolution equation is used to model an infinite population under the action of genetic recombination, where the recombination of genes with the partition $\delta$ happens at rate $\rho_\delta$. More precisely, with rate $\rho_\delta$ for $\delta = \{a_1, . . . , a_r\}$, a new individual is formed by inheriting the sites in $a_i$ from parent $i$, for $1 \leq i \leq r$. We assume each parent is chosen at random on the population, and after this process one of the parents is killed at random, chosen uniformly between the parents. This interpretation can be explained from two points of view. First, the equation can be seen as a mass balance equation: For every $\delta \in \G_\rho$, an individual with genetic sequence given by $x$ is produced from the corresponding parental sequences at overall rate $\rho_\delta R_\delta(\omega)$, where $R_\delta(\omega)$ reflects the random combination on the population given by the random selection of the parents; and at the same time, individuals with genetic sequence of type $x$ are lost (i.e., replaced by new ones) at overall rate $\rho_\delta \omega_t(x)$, since one of the parents is lost to form the new individual. The second point of view was given in \cite{duality}, where it is shown that the equation can be seen as a limit of a finite population model. Loosely speaking, one can consider $N$ individuals such that any of them suffers recombination of genes with the partition $\delta$  at rate $\rho_\delta$. Let $Z^N_t$ be the counting random measure of the genetic types on $( \prod_{i \in I} A_i, \otimes_{i \in I} \B_i$), that is, it counts how many individuals of a certain type are at time $t$. It has been proven in \cite{duality} that $Z_t^N/N$ converges, when $N$ goes infinity, to the solution of equation (\ref{EDOREC}) (this convergence is in probability uniformly on compacts sets). Further discussions about the equation and the model can be found in \cite{lyu}, \cite{cl1},\cite{cl2}, \cite{bbs}. 

\medskip

Given the above explanation is that equation (\ref{introe}) has served to model the evolution of the genetic composition of a population under recombination. At an individual level, recombination is the genetic mechanism in which two parental individuals create an offspring with sexual reproduction, mixing their genetic components. During recombination, crossover events happen between parents, that is, their genetic material is cut into two parts and then it is exchanged, which produces genes with parts coming from both parents. Note that multiple crossovers could happen on one single recombination event. The way in which the crossover process and parent selection are modelled can lead to different equations having different shapes, in special being stochastic or deterministic. In particular, equation (\ref{introe}) employs the deterministic continuous-time approach, but allow general crossover patterns including more than two parents. Further details on the variety of approaches to recombination can be found in \cite{bbs} or \cite{duality}.

\medskip

We mention some historical work related to the recombination equation. One of the first descriptions of this process dates back to Morgan \cite{morgan} in 1911. The first solution in discrete time in a simpler space is due to due to Geiringer in 1944 \cite{hg}. Later, Lyubich in 1992, analyzed the structure of the solutions to this problem and explored some connections with stochastic processes \cite{lyu}. A couple of years later, Christiansen in 1999 and B\"{u}rger in 2000, explored generalization of this analysis. In particular on more complex structures (due to Christiansen \cite{cl2}) and adding other effects to the population dynamics, like selection and mutation (due to B\"{u}rger \cite{cl1}). 

\medskip

In spite of all these results, the dynamics of the population under recombination continue attracting a lot of attention nowadays. Recent literature is devoted to the analysis of the differential equation (\ref{introe}) and its analogue in discrete time. We mention some work that motivates our study. The evolution equation was introduced in the measure theory framework on \cite{bbfinal} and a recursive solution was given. In \cite{bbs} the relationship between the solution of the equation and a Markov chain (the fragmentation process) is established. In \cite{duality} it is stated a duality relationship between this stochastic process and the deterministic formulation of the crossover patterns. Finally, in \cite{sm} the quasi-limiting distribution is given for the process in discrete time, giving a great understanding of its long time behaviour. For more details on the model, we refer to the introductory section of any of the works aforementioned.

\medskip

This work focus on answering questions arising in those studies: to give an explicit solution to the dynamics in (\ref{introe}), explore a solution through a Markov chain and then study the quasi-limiting behaviour of the associated Markov chain. 

\medskip

As said, the equation (\ref{intro}) was studied in a general framework in \cite{bbs}, and its solution was given by a recursion formula using tools from combinatorics and differential equation. This solution was studied via a Markov fragmentation process in the particular case of single-crossover in \cite{prune} and \cite{bvw} by combining techniques from probability and graph theory. It remains open if the general case can be also studied using this kind of tools to obtain an explicit solution instead of a recursive one. We give a positive answer to this question and we supply an explicit solution to the equation (\ref{intro}) in terms of a family of graphs we call the fragmentation trees. In relation to the Markov fragmentation process, we obtain the quasi-limiting behaviour, that is, its behaviour when avoiding the limiting state. This is done by using similar techniques as those used in \cite{sm} for the discrete-time model.

\medskip

Our work is organized as follows. In Section 2 we give the required notation behind the recombination equation, mainly partitions and measures on probability spaces. In Section 3 we formulate the equation and relate to it a continuous time Markov process called the fragmentation process. Our results are shown in Sections 4 and 5; the main ones being Theorem \ref{LAWF} and Theorem \ref{QUASI1}. In the first one, we give a formula for the law of the fragmentation process in terms of a family of trees. In the second result, we characterize the quasi-limiting behaviour of the process before attaining its absorbing state. We emphasize that the main interest in quasi-limiting behaviour is that this gives very precise information on the deviations of the Markov chain from the limit state. This result also allows obtaining a formula for approximating the solution of the recombination equation. This is given in Theorem \ref{aproximation}. For applications, these two results are interesting for the computability of solutions of equation (\ref{introe}). Theorem \ref{LAWF} gives an explicit formula for the solutions of equation (\ref{introe}) in terms of a finite set. This contrast with the classic solutions given by the semi-group with depends on an infinite sum. Theorem \ref{aproximation} on the other hand provides a fine approximation for the solution of equation (\ref{introe}) in the case where the finite set mentioned in Theorem \ref{LAWF} is too big for allowing an explicit computation. 

\section{Partitions} \label{part}

Let $I$ be a nonempty finite set. A partition $\delta=\{L: L\in \delta\}$ of $I$ is a collection of nonempty and pairwise disjoint sets that cover $I$, any of the sets $L$ belonging to $\delta$ is called an atom of $\delta$. We note by $\AS(I)$ the family of 
partitions of $I$.

\medskip

For $\delta, \delta'\in \AS(I)$, $\delta'$ is said to be finer than $\delta$ or $\delta$ is coarser than $\delta'$, we note $\delta\preceq \delta'$, if every atom of $\delta'$ is contained in an atom of $\delta$. This is an order relation. The finest partition is $\{\{i\}: i\in I\}$, and the coarsest one is the trivial partition $\{I\}$ having a single  atom. For $\delta, \delta'\in \AS(I)$, we use $\delta\vee \delta'$ for the common refinement between the two partitions, that is
\[ \delta \vee \delta' := \{L\cap L': L\in \delta, L'\in \delta'\} \setminus \{ \emptyset \}.\] The operation $\vee$ is commutative, associative and $\{I\}$ is its unit element because $\{I\}\vee \delta=\delta$ for all $\delta\in \AS(I)$. One has $\delta\preceq \delta'$ if and only if $\delta \vee \delta'=\delta'$. 

\medskip
If $\delta$ is a partition and $J \subseteq I$ is a nonempty subset we note $\delta|_J = \{ L \cap J: L \in \delta\}$ the partition induced by $\delta$ on $J$. So, for $\delta' \in \AS(J^c)$ we have $\delta|_J \cup \delta' \in \AS(I)$.
\medskip

Let us fix $\G$ a nonempty family of partitions of $I$. Let $\delta \in \AS(I)$, $a \in \delta$, $\gamma \in \G$, we denote $\delta \lt_a^\gamma \delta'$ if $(\delta \setminus \{ a \}) \cup \gamma|_a = \delta'$. By definition the atom $a \in \delta$ is unique, so it can be noted $a(\delta,\delta')$, but there could exist several $\gamma \in \G$ fulfilling the condition. We also put $\delta \lt \delta'$ when $\delta \lt_a^\gamma \delta'$ for some $a \in \delta, \gamma \in \G$, and we say $\delta'$ is a fragmentation of $\delta$. 

\medskip

Now we associate to $\G$ the following sequence of families of partitions, which are the consecutive fragmentations of $\G$:
\begin{align}
\label{eqg1}
\Y_0(\G) &= \{ \{ I \} \}, \nonumber \\
\forall\, n\ge 1: \quad
\Y_{n+1}(\G)&=\{(\delta \setminus \{a\}) \cup \gamma|_a: \delta\in 
\Y_{n}(\G), a \in \delta, \gamma \in \G\}. 
\end{align}
It can be easily checked that for all $n \geq 1, \delta\in \Y_n(\G)$ satisfies $\delta \lt \delta$ so $\Y_n(\G)\subseteq \Y_{n+1}(\G)$ for all $n\ge 1$. This sequence stabilizes in a finite number of steps, that is there exists $n_0\ge 1$ such that $\Y_{n_0+k}(\G)=\Y_{n_0}(\G)$ for all $k\ge0$. Let
\begin{equation}
\label{eqg2}
\Y^*(\G)=\bigcup_{n\ge 0} \Y_n(\G).
\end{equation}
Note that $\Y^*(\G) = \Y_{n_0}(\G) \cup \{ \{I\} \}$. Denote by $\gamma^\G$ the partition which is the common refinement of all the partitions in $\G$, meaning that
$$
\gamma^\G=\bigvee_{\gamma \in \G} \gamma.
$$
This is the finest partition in $\Y^*(\G)$, that is $\delta\preceq \gamma^\G$ for all $\delta\in \Y^*(\G)$. The atoms of $\gamma^\G$ are the nonempty intersections $\bigcap_{\gamma \in \G} L_\gamma$, where $(L_\gamma: \gamma \in \G)$ varies over all the sequences of atoms of the partitions in $\G$. 

\medskip

The partition $\gamma^\G$ is the unique element in $\Y^*(\G)$ that satisfies $\gamma^\G \vee \gamma=\gamma^\G$ for all $\gamma \in \G$. Also $\gamma^\G\vee \delta=\gamma^\G$ for all $\delta\in \Y^*(\G)$, which means that $\gamma^\G$ is an absorbing element in $(\Y^*(\G),\vee)$. 

\begin{remark}
\label{rem3a}
If one redefines $I$ as the set of atoms of the partition $\gamma^\G$ one can always assume that the atoms of $\gamma^\G$ are singletons, that is $\gamma^\G=\{\{i\}: i\in I\}$. We will not do it because there is no substantial gain in notation.
\end{remark}

\subsection{Product probability spaces}
Let $(\A_i,{\cal B}_i)_{i\in I}$ be a finite collection of measurable spaces and let $\prod_{i\in I}\A_i$ be a product space endowed with the product $\sigma-$field $\otimes_{i\in I} {\cal B}_i$. Denote by ${\cal P}_I$ the set of probability measures on $(\prod_{i\in I}\A_i,\otimes_{i\in I} {\cal B}_i)$. Let $J\subseteq I$ and ${\cal P}_J$ be the set of probability measures on $(\prod_{i\in J}\A_i,\otimes_{i\in J} {\cal B}_i)$. The marginal $\mu_J\in {\cal P}_J$ of $\mu\in {\cal P}_I$ on $J$, is given by
\[ \forall C\in \otimes_{i\in J} {\cal B}_i:\quad \mu_J(C)=\mu(C\times \prod_{i\in J^c}\A_i).\]
For $J=I$ we have $\mu_I=\mu$, and we put $\mu_\emptyset\equiv 1$ to get consistency in all the relations where it will appear, in particular in product measures.

\medskip

Let $J,K\subseteq I$, $J\cap K=\emptyset$. For $\mu_J\in {\cal P}_J$, $\mu_K\in {\cal P}_K$, we denote by $\mu_J\otimes \mu_K$ its product measure. We have that $\otimes$ is commutative and associative, $\mu_\emptyset=1$ is the unit element, and $\otimes$ is stable under  restriction, that is, for all $J, K, M\subseteq I$ with $J\cap K=\emptyset$ and $M\subseteq J\cup K$,
\begin{equation}
\label{eab}
(\mu_J\otimes \mu_{K})_M=\mu_{J\cap M}\otimes \mu_{K\cap M}.
\end{equation}
Associated to $\otimes$ we define the recombination of a measure $\mu \in \mathcal{P}_I$ by a partition $\delta \in \AS(I)$ by:
\[ R_\delta (\mu) = \bigotimes_{L \in \delta } \mu_L. \]
As seen in \cite{bbs} this operator is Lipschitz of constant $2|\delta|+1$ with respect to the norm of total variation $|| \cdot ||$. We recall that, for $\mu, \nu \in \mathcal{P}_I$, $|| \mu - \nu || = (\mu-\nu)_+(\prod_{i \in I} A_i) + (\mu-\nu)_- (\prod_{i \in I} A_i)$, where $(\mu-\nu)_+$ and $(\mu-\nu)_-$ are the (non-negative) measures called the positive part and the negative part respectively.

\section{The equation and its solution}

Let $(\rho_\delta)_{\delta \in \AS(I)}$ be a collection of non-negative real numbers, called the recombination rates. Let $\G_\rho$ be the support of $\rho$, that is:
\[ \G_\rho := \{ \delta : \rho_\delta > 0 \}. \]
We note by $|\rho| = \sum_{\gamma \in \G_\rho} \rho_\gamma$ the total mass of $\rho$. We assume that $|\rho|>0$ so that $\G_\rho \not = \emptyset$.
We are interested in studying the following ordinary differential equation, which acts on $(\mathcal{P}_I, || \cdot ||)$ (see \cite{bbs}):
\begin{equation} \label{EDOREC} \begin{cases} \frac{d \omega_t}{d t} = \ds \sum_{\delta \in \G_\rho} \rho_\delta ( R_\delta - 1) \omega_t \: , \\ \omega_0 = \mu \:, \end{cases} \end{equation}
here $1 \omega = \omega$. Since $R_{\{ I \}}(\mu) = \mu$ we will assume $\rho_{\{ I \}} =0$. Given that $R_\delta$ is Lipschitz it follows that problem (\ref{EDOREC}) admits a unique solution. We will denote by $\Xi_t \mu$ the solution, at time $t$, with initial condition $\mu$. We will find an expression for the solution of the equation in terms of a Markov chain.

\medskip

\begin{definition} \label{fragprosdef}
The \textit{fragmentation process} is the continuous time Markov process $(X_t)_{t \geq 0}$ taking values on $\Y^*(\G_\rho)$ whose Markov generator is given by: 
\[\forall \delta,\delta' \in \Y^*(\G_\rho): \: \begin{cases}
    Q_{\delta, \delta'}  =  \ds \sum_{ \substack{\gamma \in \G_\rho: \\ \delta \lt_a^{\gamma} \delta'}} \rho_{\gamma}  &  \text{if} \: \delta \lt \delta'; \\
    Q_{\delta, \delta'} = 0  &  \text{if} \: \delta' \not = \delta,\delta \not \lt \delta';\\
    Q_{\delta, \delta} = \ds  - \sum_{\substack{\delta' \in \Y^*(\G): \\ \delta \lt \delta', \delta \not = \delta' }} Q_{\delta, \delta'}. 
\end{cases}\]
\end{definition}
Recall that if $\delta \lt \delta'$ then $a=a(\delta,\delta')$ is uniquely defined, so there is no ambiguity in the definition of $Q_{\delta,\delta'}$ when $\delta \lt \delta'$. The definition of $Q_{\delta,\delta}$ ensures we are in the conservative case. 

\medskip

For $\delta \in \mathcal{Y}^*(\mathcal{G}_\rho)$, we denote by $\PP_{\delta}$ the law of the process starting on the state $\delta$, and by $\EE_\delta$ the associated expected value. As a abuse of notation we will use $\PP$, $\EE$ for $\PP_{\{ I \}}$ and $\EE_{\{I \} }$ respectively. It can be checked that $Q_{\{ I \},\{ I \} } = -|\rho|$. For $\delta,\delta' \in \Y^*(\G_\rho), t \geq 0$ we use $P_{\delta,\delta'}(t) = \PP_\delta(X_t=\delta')$ to note the transition semigroup.
\begin{remark} \label{rem:ia}
The process satisfies the following property: if $X_t = \{ A_1 ,.., A_r \}$ then each atom $A_i$ splits up into a partition $B_i$ at rate $\ds \sum_{\substack{ \gamma \in \G_\rho: \\ \gamma|_{A_i} = B_i}} \rho_\gamma$, independently on each atom. When the process makes a jumps it evolves to a finer partition, and so if it exits from a state it does never returns to it.
\end{remark}

\begin{remark} \label{fragmeaning}
The fragmentation process can be seen as the action of recombination on the ancestry of the genetic material of an individual backwards in
time. Namely, if a sequence is pieced together according to a partition $\delta = \{a_1, . . . ,a_r\}$ from
various parents forwards in time, then the sequence is partitioned into the parts
of $\delta$ when we look backwards in time, where each part $a_i$ is associated with a different parent.
It can be seen that the fragmentation process is the limit of the analogue stochastic process in finite populations, see \cite{duality}, \cite{bbs}, \cite{Durrett}.
\end{remark}

The next theorem is a continuous time version of the discrete time analogue in \cite{sm}, Proposition 3.2. Even though the framework is different this result was proved in \cite{bbfinal}, Theorem 2. Hence, we omit the proof.
\begin{theorem} \label{RECSOL}
Let $(X_t)_{t \geq 0}$ be the fragmentation process, and let $\mu^{(t)} := R_{X_t} (\mu) = \bigotimes_{L \in X_t} \mu_L$ be the recombination of $\mu$ by $X_t$.  Then:
\[ \Xi_t \mu  = \mathbb{E}(\mu^{(t)}) := \sum_{\delta \in \Y^*(\G_\rho)} \PP(X_t = \delta) \bigotimes_{L \in \delta} \mu_L. \]
\end{theorem}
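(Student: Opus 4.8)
The plan is to verify that the right-hand side $F(t) := \sum_{\delta \in \Y^*(\G_\rho)} \PP(X_t = \delta)\, R_\delta(\mu)$ solves the initial value problem (\ref{EDOREC}), and then appeal to uniqueness of the solution (guaranteed by the Lipschitz property of each $R_\delta$). First I would check the initial condition: at $t=0$ the process starts deterministically at $\{I\}$, so $F(0) = R_{\{I\}}(\mu) = \mu$, matching $\omega_0 = \mu$. The substance of the argument is to differentiate $F(t)$ in the norm $\|\cdot\|$ and match it with $\sum_{\delta \in \G_\rho} \rho_\delta (R_\delta - 1)\omega_t$ evaluated at $\omega_t = F(t)$; here one must be a little careful because $\G_\rho$ (the support of $\rho$) and $\Y^*(\G_\rho)$ (the reachable states) play different roles, but since $R_\delta$ as an operator on $\mathcal{P}_I$ only cares about the \emph{partition} $\delta$, the combinatorics work out.

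The key computational step is the Kolmogorov forward (master) equation for the finite-state, continuous-time Markov chain $(X_t)$: $\frac{d}{dt}\PP(X_t=\delta) = \sum_{\delta'} \PP(X_t=\delta') Q_{\delta',\delta}$. Plugging this into $\frac{d}{dt}F(t) = \sum_\delta \big(\frac{d}{dt}\PP(X_t=\delta)\big) R_\delta(\mu)$ and using the explicit form of $Q$ from Definition \ref{fragprosdef}, the diagonal terms $Q_{\delta,\delta} = -\sum_{\delta \lt \delta', \delta \neq \delta'} Q_{\delta,\delta'}$ produce a $-$ contribution and the off-diagonal terms $Q_{\delta',\delta} = \sum_{\gamma: \delta' \lt_a^\gamma \delta} \rho_\gamma$ produce a $+$ contribution. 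Re-indexing the off-diagonal sum over pairs $\delta' \lt_a^\gamma \delta$ by $(\delta', \gamma)$ — i.e. fixing the source state $\delta'$ and the crossover partition $\gamma \in \G_\rho$ — one sees that for each such $\delta'$ and each atom $a \in \delta'$, the target is $\delta = (\delta' \setminus \{a\}) \cup \gamma|_a$. The crucial identity to establish is then
\[
\sum_{a \in \delta'} R_{(\delta' \setminus \{a\}) \cup \gamma|_a}(\mu) \;=\; \text{(something that telescopes with)} \; R_\delta(R_\gamma(\mu)) \;=\; R_\gamma(R_{\delta'}(\mu)),
\]
which follows from the compatibility of $\otimes$ with restriction, equation (\ref{eab}), together with the semigroup-like property $R_\delta \circ R_{\delta'} = R_{\delta \vee \delta'}$ for the recombination operators, and the fact $\gamma^{\G} \vee \delta = \gamma^{\G}$ noted in Section \ref{part}. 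In fact the cleanest route is: for a fixed state $\delta'$ of the chain, the total outflow rate at $\delta'$ is $\sum_{\gamma \in \G_\rho}\sum_{a \in \delta'}\mathbf{1}[\gamma|_a \neq \{a\}]\rho_\gamma$, and grouping the forward-equation terms by the source $\delta'$ shows that $\frac{d}{dt}F(t)$ equals $\sum_{\delta'}\PP(X_t=\delta')\sum_{\gamma \in \G_\rho}\rho_\gamma\big(R_\gamma(R_{\delta'}(\mu)) - R_{\delta'}(\mu)\big)$, which by linearity is exactly $\sum_{\gamma \in \G_\rho}\rho_\gamma(R_\gamma - 1)F(t)$.

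The main obstacle I anticipate is the bookkeeping in that regrouping: one must show that acting by a single ``$\gamma$-step'' on the partition $\delta'$ (in the sense of the chain, which replaces \emph{one} atom $a$ by $\gamma|_a$) and summing the resulting measures correctly reassembles into the single global operator $R_\gamma R_{\delta'} = R_{\gamma \vee \delta'}$ — and that the jump rates $\sum_{\gamma: \delta' \lt_a^\gamma \delta}\rho_\gamma$ match the coefficient $\rho_\gamma$ attached to each $(\gamma, a)$ pair without double counting when distinct $\gamma$'s induce the same target partition. This is precisely the point where Definition \ref{fragprosdef}'s summation over $\{\gamma : \delta' \lt_a^\gamma \delta\}$ was designed to make things consistent; once the identity $R_\delta(R_{\delta'}(\mu)) = R_{\delta \vee \delta'}(\mu)$ and (\ref{eab}) are in hand, the verification is a finite, if slightly tedious, manipulation. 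A secondary (minor) point is justifying term-by-term differentiation of the finite sum $F(t)$ in the total-variation norm, which is immediate since $\Y^*(\G_\rho)$ is finite and $t \mapsto \PP(X_t = \delta)$ is smooth. Finally, invoking uniqueness for (\ref{EDOREC}) identifies $F(t)$ with $\Xi_t\mu$, completing the proof; I would remark that this is the continuous-time counterpart of \cite{sm}, Proposition 3.2, and is already contained in \cite{bbfinal}, Theorem 2, so a full write-up may legitimately be compressed or omitted as the authors indicate.
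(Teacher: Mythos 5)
Your overall strategy (verify that $F(t) := \sum_{\delta} \PP(X_t=\delta)R_\delta(\mu)$ satisfies (\ref{EDOREC}) and invoke uniqueness) is reasonable, and the initial condition, the term-by-term differentiation, and the use of the Kolmogorov forward equation are all fine. The paper itself omits the proof, deferring to \cite{bbfinal}, so the comparison here is with what a correct verification would require — and your write-up has a genuine gap at exactly the point you flag as ``crucial''. The identity
\[
\sum_{a \in \delta'} \bigl( R_{(\delta'\setminus\{a\})\cup\gamma|_a}(\mu) - R_{\delta'}(\mu)\bigr) \;=\; R_\gamma\bigl(R_{\delta'}(\mu)\bigr) - R_{\delta'}(\mu)
\]
is false. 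Take $\delta'=\{A,B\}$ with $\gamma$ splitting both atoms, say $\gamma|_A=\{A_1,A_2\}$ and $\gamma|_B=\{B_1,B_2\}$. The left side is $\mu_{A_1}\otimes\mu_{A_2}\otimes\mu_B + \mu_A\otimes\mu_{B_1}\otimes\mu_{B_2} - 2\,\mu_A\otimes\mu_B$, while the right side is $R_{\gamma\vee\delta'}(\mu)-R_{\delta'}(\mu) = \mu_{A_1}\otimes\mu_{A_2}\otimes\mu_{B_1}\otimes\mu_{B_2} - \mu_A\otimes\mu_B$: the chain replaces \emph{one} atom per jump, whereas $R_\gamma R_{\delta'} = R_{\gamma\vee\delta'}$ refines \emph{every} atom simultaneously, and nothing telescopes.

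The second, and more fundamental, problem is the phrase ``which by linearity is exactly $\sum_\gamma \rho_\gamma(R_\gamma-1)F(t)$''. The operator $R_\gamma(\nu)=\bigotimes_{M\in\gamma}\nu_M$ is \emph{not} linear in $\nu$ (it is a product of marginals, hence polynomial of degree $|\gamma|$); in general $R_\gamma\bigl(\sum_{\delta'}p_{\delta'}R_{\delta'}\mu\bigr) \neq \sum_{\delta'}p_{\delta'}R_\gamma\bigl(R_{\delta'}\mu\bigr)$. This nonlinearity is the entire difficulty of equation (\ref{EDOREC}): the content of Theorem \ref{RECSOL} is that a convex combination whose coefficients evolve \emph{linearly} (by the forward equation of $Q$) nevertheless solves a \emph{nonlinear} ODE. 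Making the verification work requires the probabilistic structure of the fragmentation process — namely that, as in Remark \ref{rem:ia} and the marginalization proposition, the restrictions of $X_t$ to already-separated blocks evolve as independent fragmentation processes, which is what lets one identify $R_\gamma(F(t))=\bigotimes_{M\in\gamma}\EE\bigl(R_{X_t|_M}\mu_M\bigr)$ with the correct combination of joint probabilities. Neither of your two displayed identities supplies this; as written, you have two false equalities whose errors would have to cancel, which is not a proof. You should either carry out the generator/duality computation that exploits fragment independence, or simply cite \cite{bbfinal}, Theorem 2, as the authors do.
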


Theorem \ref{RECSOL} transform the analysis of equation (\ref{EDOREC}) in the analysis of the quantity $\PP(X_t = \delta)$. In particular obtaining explicit expression or approximations for $\PP(X_t = \delta)$ turn into explicit expression for  (\ref{EDOREC}). With this in mind our objective in the next Section is give a explicit formula for $\PP(X_t = \delta)$, given by Theorem \ref{LAWF}. This implies an explicit formula for $\Xi_t \mu$. In Section \ref{Sec:lim} we study the asymptotic behaviour of $(X_t)_{t \geq 0}$, which translate in approximations of $\PP(X_t = \delta)$. By Theorem \ref{RECSOL} this gives an approximation of $\Xi_t \mu$, which is Theorem \ref{aproximation}

\medskip

\section{Law of the fragmentation process}

\subsection{Fragmentation trees}

Our objective is to obtain a formula for the law of the fragmentation process, that is, to be able to compute $\PP(X_t = \delta)$ for any $\delta \in \Y^*(\G_\rho)$. In \cite{prune} a formula is obtained on the single-crossover case, that is when $\G_\rho$ only contains partitions of the type $\{ \{ 1,..,.m\},\{m+1,...,n\} : 1 < m < n \}$. The main idea is to code the embedded jump chain of the fragmentation process by taking advantage of the fact that once the sites split they become independent. To this end, it is defined the notion of \textit{segmentation trees}, which is a family of graphs serving to this purpose. 

\medskip

In this section, we introduce another family of graphs that will be used in the same manner but for general partitions. These graphs are called \textit{fragmentation trees}, and we construct them in two steps.

\medskip

First we consider a rooted tree $T=(\GG, E, \delta_0)$, with set of nodes $\GG$, set of edges $E$ and with root $\delta_0 \in \G_\rho$. The nodes and the edges fulfill the following properties, called (ORT), for original rooted tree:  
\begin{itemize}
    \item Every $\alpha \in \GG$ fulfills that $\alpha \in \AS(U)$ for some $U \subseteq I$ and $\alpha \not = \{ U \}$.
    \item For all $(\alpha,\beta) \in E \subseteq \GG^2$, the child $\beta$ is a fragmentation of $\alpha$, that is, there $\exists L = L(\alpha, \beta) \in \alpha, \epsilon \in \G_\rho$, $\epsilon|_{L} \not = \{ L \}$ such that $\epsilon|_{L} = \beta$
    \item The atom $L$ is unique between siblings. That is, when $\beta_1,\beta_2$ are two children of $\alpha$ we have $L(\alpha,\beta_1) \not = L(\alpha,\beta_2)$
    \item $\delta_0$ is indeed the root of $T$. That is, $\delta_0 \in \GG$ and for all $(\alpha,\beta) \in E$ we have $\beta \not = \delta_0$.
    \item The graph given by $T$ is indeed a rooted tree. Meaning that, for all $\alpha \in \mathbb{G} \setminus \{ \delta_0 \}$ we have there is a finite $n \in \mathbb{N}$ and $(\alpha_i)_{i=0}^n \subseteq \mathbb{G}$ with $(\alpha_i, \alpha_{i+1}) \in E$ for all $0 \leq i \leq n-1$, such that $(\delta_0, \alpha_0), (\alpha_n, \alpha) \in E$.
\end{itemize}

The properties of (ORT) imply that every node $\alpha \in \GG$ is a restriction of a partition of $\Y^*(\G_\rho)$ to some of the atoms of the root $\delta_0$. It is also deduced that $\alpha$ can have at most $|\alpha|$ children. We stress that the graph $T$ is a tree. We call it the original tree, and the nodes in $\GG$ and edges in $E$ are called the original nodes and the original edges, respectively.

\medskip

We notice there is a partial order of the nodes in any tree fulfilling the properties (ORT). This order is given by the path from the root. That is, for $\alpha, \beta \in \mathbb{G}$ we say that $\alpha \preceq_\GG \beta$, if there is a finite $n \in \mathbb{N}$ and $(\alpha_i)_{i=0}^n \subseteq \mathbb{G}$ with $(\alpha_i, \alpha_{i+1}) \in E$ for all $0 \leq i \leq n-1$, such that $(\alpha, \alpha_0), (\alpha_n, \beta) \in E$. In that case we say that $\beta$ hangs from $\alpha$. We note $\delta_0$ is minimal for $\preceq_\GG$, that is, $\delta_0 \preceq_\GG \alpha$ for all $\alpha \in \GG$.

\medskip

We also include in the definition of (ORT) the degenerate case $\GG = \emptyset$. In that case, $T$ is just the empty graph.

\medskip

Now we will make some modifications to this tree. If $T$ is the empty graph then the modification will give us a tree that just consist on a single node $\{ I \}$. In any other case we will add some extra nodes and extra edges. We refer to the extra edges as branches. First, we add extra nodes and connect them with a new branch to every original node, in such a way that every $\alpha \in \GG$ has exactly $|\alpha|$ children. The new nodes connected to $\alpha$ are given by the elements contained on its ancestor which does not contain sites of any of their siblings. That is, for every $\alpha \in \GG$, the new nodes are given by the set:
\[ \mathcal{N}_\alpha := \{ a \in \alpha | \not \exists \beta \in \GG, \text{ with } (\alpha,\beta) \in E \text{ such that } L(\alpha,\beta) = a \} \]
Finally, we add the extra node $r=\{I\}$ and connect it to $\delta_0$ with a branch. That is, we add the branch $(r, \delta_0)$. Hence the branches are given by:
\[ \mathfrak{B} = \{(r,\delta_0)\} \bigcup \left( \bigcup_{\alpha \in \GG} \bigcup_{a \in \mathcal{N}_\alpha} \{(\alpha,a)\} \right) \]
This construction gives a new tree that responds to the following definition.

\begin{definition} \label{def:tort}
The fragmentation trees are the family of graph obtained by the procedure described above. We start with a tree $T=(\GG, E, \delta_0)$ that fulfills (ORT) and then we modify it using the last algorithm, obtaining a tree noted $T^I=(\hat{\GG},\hat{E},\delta_0)$. We have $\GG \subseteq \hat{\GG}$ and $E \subseteq \hat{E}$, so when working with $T^I$ we refer to $\GG$ and $E$ as the original nodes and edges, respectively.
\end{definition}
We stress that in the degenerate of $T$ being the empty tree then $T^I$ consist of the graph with a single node $\hat{\GG} = \{ I \}$ and no edges or branches. We denote by $\aL_\GG$ the set of leaves of the fragmentation tree $T^I$, so that $\hat{\GG}=\GG \cup \aL_\GG \cup r$. From our algorithm and conditions (ORT) it follows that $\aL_\GG$ defines a partition of $I$.

\medskip

In figure \ref{fig:EJ} we supply an example of a tree and how it is modified to get a fragmentation tree. In this example $I=\{1,2,3,4,5,6,7\}$, $\delta_0 = \{ \{1,6,7\},\{2,3,4\},\{5\} \}$ and the rates are such that $\{ \gamma_1, \gamma_2, \gamma_3 \} \subseteq \mathcal{G}_\rho$ with $\gamma_1 = \{ \{1,6,7\},\{2,3,4\},\{5\}\}$, $\gamma_2 = \{\{1\},\{2,3\},\{4,5\},\{6,7\}\}$, $\: \: \: \: \: \: \: \: \: \: \: \: \: \: \: $ 
\linebreak $\gamma_3 = \{\{1,2,3,4\},\{5\},\{6\},\{7\}\}$. 

\begin{figure}[h]
\begin{subfigure}[b]{0.475\textwidth}
        \centering
        \resizebox{\linewidth}{!}{
            \begin{tikzpicture}[auto, node distance=2.5cm, every loop/.style={},thick, main node/.style={ellipse, draw,font=\sffamily\bfseries}]

  \node[main node] (0) []{$\{ \{1,6,7\}, \{2,3,4\},\{5\}\}$};
  \node[main node] (1) [below of=0, xshift= - 2 cm] {  $\{\{1\},\{6,7\}\}$ };
  \node[main node] (2) [below of=1] {$\{\{ 6\}\{7\}\}$ };
  \node[main node] (3) [below of=0, xshift= 2 cm] {$\{\{2,3\},\{4\}\}$};

  \path[every node/.style={font=\sffamily\small}]
    (0) edge node [left] {} (1)
        edge node [left] {} (3)
    (1) edge node [left] {} (2);
\end{tikzpicture}

        }
        \caption{Tree $T$ fulfilling (ORT)}
        \label{fig:SUB1}
    \end{subfigure}
    \begin{subfigure}[b]{0.475\textwidth}
    \centering
        \resizebox{\linewidth}{!}{
           \begin{tikzpicture}[auto, node distance=2.5cm, every loop/.style={},thick, main node/.style={ellipse, draw,font=\sffamily\bfseries}]

  \node[main node] (11) [yshift=0.75 cm]{$\{ \{1,2,3,4,5,6,7\} \}$};
  \node[main node] (0) [below of=11, yshift=0.75 cm]{$\{\{1,6,7\}, \{2,3,4\},\{5\}\}$};
  \node[main node] (1) [below left of=0, xshift = -3 cm] {$\{\{1\},\{6,7\}\}$ };
  \node[main node] (2) [below of=1,xshift = 1.5 cm,  yshift = 0.5 cm] {$\{\{ 6\},\{7\}\}$ };
  \node[main node] (3) [below of=0, yshift = 0.6 cm] {$\{\{2,3\},\{4\}\}$};

  \node[main node] (4) [below of=0, xshift = 3 cm, yshift= 0.6 cm]{$\{5\} $};
  \node[main node] (5) [below of=1, xshift = -1.5 cm, yshift= 0.5 cm]{$\{1 \}$};
  \node[main node] (6) [below of=2, xshift = -1.5 cm, yshift= 1 cm]{ $\{6 \}$ };
  \node[main node] (7) [below of=2, xshift = 1.5 cm, yshift= 1 cm]{$\{7 \}$};
  \node[main node] (8) [below of=3, xshift = -1 cm, yshift= 1 cm]{$\{ 2,3 \}$};
  \node[main node] (9) [below of=3, xshift = 1 cm, yshift= 1 cm]{$\{ 4 \}$};

  \path[every node/.style={font=\sffamily\small}]
    (0) edge node [left] {}  (1)
        edge node [left] {} (3)
        edge[red] node [right] {} (4) 
    (1) edge node [left] {} (2)
        edge[red] node [left] {} (5) 
    (2) edge[red] node [left] {} (6) 
        edge[red] node [right]{} (7) 
    (3) edge[red] node [left] {} (8) 
        edge[red] node [right] {} (9) 
    (11) edge[red] node [left] {} (0);
\end{tikzpicture}
        }
        \caption{Fragmentation tree $T^I$, branches are in red.}   
        \label{fig:SUB2}
    \end{subfigure}
\caption{In (a) $T$ is a tree that fulfill (ORT), the children of the root are made by using $\gamma_2$ but they could also use other partition. In (b) we show the fragmentation tree $T^I$ associated to $T$, and it is the case that $\mathcal{L}_\mathbb{G} = \{\{1\},\{2,3\},\{4\},\{5\},\{6\},\{7\}\}$.}
\label{fig:EJ}
\end{figure}
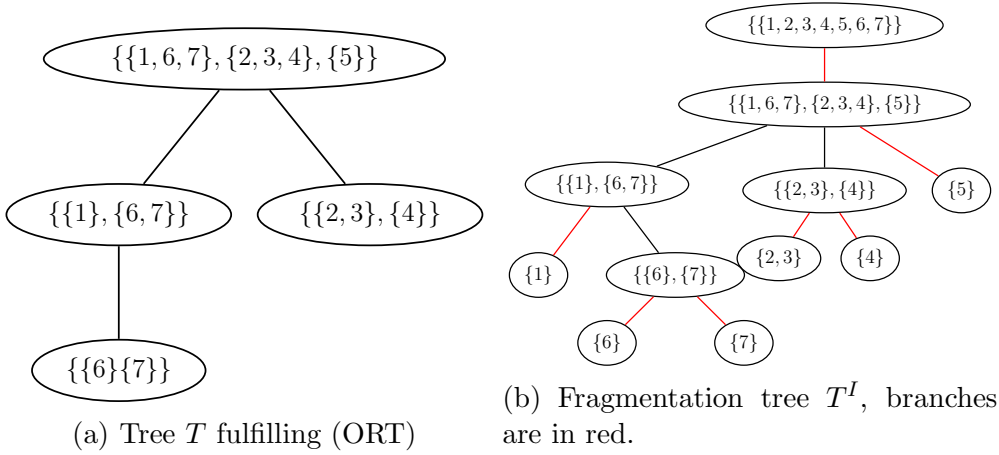

\medskip

\subsection{Formula for the law}
Let us use fragmentation trees to define some distinguished events of the fragmentation process $(X_t)_{t \geq 0}$. We recall that if $\alpha \in \GG$ then there exists $J \subseteq I$ such that $\alpha \in \AS(J)$ and $\alpha \not = \{ J \}$. So, we can define the hitting time:
\[ \mathcal{T}_\alpha = \min \{ t \geq 0 : X_t |_J = \alpha \}.\]
which is the first time the fragmentation process restricted to $J$ hits $\alpha$. With this, for $U \subseteq I$ the following hitting time can be defined:
\[ \mathcal{T}_U = \min\{ \mathcal{T}_\beta : \beta \in \AS(U), \exists \gamma \in \G_\rho, \gamma \not = \{ U \}, \: \text{such that} \: \gamma|_U = \beta\}, \]
which is the first time the sites on $U$ are fragmented. Both $\mathcal{T}_\alpha$ and $\mathcal{T}_U$ are stopping times. Finally we define the events:
\[Max_t(\GG)= \{ \max\{ \mathcal{T}_\alpha: \alpha \in \GG \} \leq t < \min\{\mathcal{T}_{L}: L \in \aL_\GG \} \} ,\]
\[ Or(\GG)= \bigcap_{\alpha \in \GG} \{ \mathcal{T}_\alpha = \min\{ \mathcal{T}_\beta : \beta \in \GG_\alpha \}  \}. \]
Here $\GG_\alpha$ is the set of nodes hanging from $\alpha$ in the original tree $T$, that is:
\[ \GG_\alpha = \{ \beta \in \GG | \alpha \preceq_\GG \beta \}. \]
The event $Max_t(\GG)$ is the one where, up to time $t$, the fragmentation process, restricted to each of its atoms, performs exactly the transitions that appear in the fragmentation tree. The event $Or(\GG)$ is such that when following every path from the root to the leaves, the process performs the transitions in the order given by the tree. We will say the tree $T^I$ codes the embedded jump chain of the fragmentation process when both events happen. In this case $X_t = \mathcal{L}_\mathbb{G}$ and for $(\alpha,\beta) \in E$ we have $\mathcal{T}_{\alpha} \leq \mathcal{T}_{\beta}$. This means that the fragmentation process at time $t$ is the set of leaves of the tree and the paths of the embedded jump chain, when following the atoms starting from the root to the set of leaves, evolves according to the order of the hitting times of the nodes of the tree. Therefore, we are interested in the event:
\[ \F_t (T^I) := Max_t (\GG) \cap Or(\GG). \]
Hence, if $\mathfrak{T}(\alpha)$ is the set of all fragmentation trees satisfying $\aL_\GG = \alpha$, we have
\[ \PP(X_t = \alpha) = \sum_{T^I \in \mathfrak{T}(\alpha)} \PP(\F_t(T^I)), \]
because $\{ X_t = \alpha \}$ is a disjoint union of the class of events $\{ \F_t(T^I)\}_{T^I \in \mathfrak{T}(\alpha)}$. For computing $\PP(\F_t(T^I))$ we require to introduce some additional notation and prove some properties of the fragmentation process.

\begin{definition} \label{margrate}
For $S \subseteq I$ and $\gamma' \in \AS(S)$ we define the marginalized recombination rate as:
\begin{equation} \label{marginalrate}
\rho_{\gamma'}^S := \sum_{ \substack{\gamma \in \G_\rho: \\ \gamma|_S = \gamma'}} \rho_\gamma. \end{equation}
\end{definition}

\begin{proposition}

The fragmentation process is consistent by marginalization. That is: for $S \subseteq I$ the restricted process $(X_t|_S)_{t \geq 0}$ is a fragmentation process with state space $\Y^*(\G_\rho|_S)$ and with rates $(\rho_{\gamma}^S)_{\gamma \in \G_\rho|_S}$, where $\G_\rho|_S$ are the elements of $\G_\rho$ restricted to $S$.

\end{proposition}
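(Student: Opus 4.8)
The plan is to verify directly that the restricted process $(X_t|_S)_{t\ge 0}$ is Markov with the claimed generator; since the fragmentation process is a continuous-time Markov chain on a finite state space, it suffices to identify the jump rates of the restriction and check they depend only on the current value of $X_t|_S$. First I would recall from Remark~\ref{rem:ia} the explicit description of the dynamics: when $X_t=\{A_1,\dots,A_r\}$, each atom $A_i$ independently splits into $\gamma|_{A_i}$ at rate $\sum_{\gamma\in\G_\rho:\,\gamma|_{A_i}=\beta}\rho_\gamma$ (for each admissible $\beta$), and these splittings are independent across atoms. The key observation is the restriction identity for partitions: for $J\subseteq I$ and $\gamma\in\AS(I)$ one has $(\gamma|_J)|_{S} = \gamma|_{J\cap S}$, and more to the point, if $X_t|_S = \{A_1\cap S,\dots\}$ (dropping empties), then a split of an atom $A_i$ of $X_t$ via $\gamma$ changes the $S$-marginal exactly when $\gamma|_{A_i\cap S}\ne\{A_i\cap S\}$, and the resulting change is $A_i\cap S \mapsto \gamma|_{A_i\cap S}$.

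The main steps, in order, are: (1) Fix $S$ and a state $\eta\in\Y^*(\G_\rho|_S)$; suppose $X_t|_S=\eta$, say $\eta=\{B_1,\dots,B_s\}$ with $B_j = A_{i(j)}\cap S$ for the atoms $A_{i(j)}$ of $X_t$ that meet $S$. (2) Compute the rate at which $X_t|_S$ jumps to a given fragmentation $(\eta\setminus\{B_j\})\cup\beta$ of $\eta$ (with $\beta\in\AS(B_j)$, $\beta\neq\{B_j\}$): by the independence in Remark~\ref{rem:ia}, only a split of the atom $A_{i(j)}\supseteq B_j$ can cause this, and such a split via $\gamma\in\G_\rho$ produces this exact change on $S$ iff $\gamma|_{B_j}=\beta$ (note $\gamma|_{B_j}=\gamma|_{A_{i(j)}\cap S}$); hence the total rate is $\sum_{\gamma\in\G_\rho:\,\gamma|_{B_j}=\beta}\rho_\gamma$. (3) Recognize this as $\rho^S_{\beta'}$ for the appropriate $\beta'\in\AS(S)$, where $\beta' = (\eta\setminus\{B_j\})\cup\beta$ read as a partition of $S$ — more precisely the relevant marginalized rate is $\rho^S_{\gamma|_S}$ summed over $\gamma$ inducing the given split, which by Definition~\ref{margrate} is exactly the rate of the fragmentation process on $S$ with rate family $(\rho^S_{\gamma'})_{\gamma'\in\G_\rho|_S}$. (4) Since this rate depends only on $\eta$ (not on the pre-image $X_t$), the restriction is Markov; check the state space is $\Y^*(\G_\rho|_S)$ by noting $\Y_n(\G_\rho)|_S = \Y_n(\G_\rho|_S)$ inductively from \eqref{eqg1}, using again $(\gamma|_a)|_S=\gamma|_{a\cap S}$ and that restriction commutes with the fragmentation step; also check the diagonal entries match by conservativity. (5) Conclude that $(X_t|_S)_{t\ge0}$ has the generator of Definition~\ref{fragprosdef} with $I$ replaced by $S$ and $\G_\rho$ by $\G_\rho|_S$ with rates $\rho^S$.

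The step I expect to be the main obstacle is (2)–(3): bookkeeping the correspondence between a split $\gamma$ of an atom $A_i$ of $X_t$ and the induced change on $X_t|_S$, in particular making sure that distinct $\gamma$'s that induce the same change on $S$ are summed (which is precisely why the marginalized rate $\rho^S$ appears rather than $\rho$), and that a $\gamma$ with $\gamma|_{A_i\cap S}=\{A_i\cap S\}$ contributes nothing to the $S$-dynamics even though it is a genuine jump of the full process. One must also be slightly careful that an atom $A_i$ of $X_t$ with $A_i\cap S=\emptyset$ never affects the restriction, and that two atoms $A_i\neq A_{i'}$ can in principle have $A_i\cap S$ equal only if one is empty, so the indexing $j\mapsto i(j)$ is well-defined; this follows since the $A_i$ are disjoint. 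Once the rate identity is in place, the Markov property and the identification of the state space are routine, using the elementary fact that a time-homogeneous function of a finite-state Markov chain is again Markov precisely when the lumped transition rates are well-defined, which is exactly what step (2) establishes.
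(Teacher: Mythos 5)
Your proposal is correct and follows essentially the same route as the paper: both arguments amount to verifying strong lumpability, i.e.\ that the total jump rate from a state $\delta$ into a class $\{\hat\delta : \hat\delta|_S = \eta'\}$ depends only on $\delta|_S$, and then identifying that lumped rate with the marginalized rate $\rho^S$ of Definition~\ref{margrate}. The paper formalizes this through the equivalence relation $\delta\sim_S\delta' \Leftrightarrow \delta|_S=\delta'|_S$ and a direct manipulation of the generator entries, while you read the same computation off the atom-splitting description of Remark~\ref{rem:ia}; the only caveat is that the full ``independence across atoms'' claim of that remark is, in the paper's own logic, a consequence of this proposition, but your argument only actually uses that each jump modifies a single atom at a rate determined by that atom and its image, which is immediate from Definition~\ref{fragprosdef}.
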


\begin{proof}

To this end we use lumping of Markov chains (see \cite{lump}). Let us introduce this concept. For $S \subseteq I$, $\sim_S$ is the relation on $\Y^*(\G_\rho)$ given by:
\[  \forall \delta,\delta' \in \Y^*(\G_\rho): \: \:\delta \sim_S \delta' \Leftrightarrow \delta|_S = \delta'|_S. \]
It is straightforward to check that $\sim_S$ is an equivalence relation. We note by $\Y^*(\G_\rho)/\sim_S$ the set of equivalence classes, which is canonically identified with $\Y^*(\G_\rho|_S)$. We note by $[\delta]$ the equivalence class of $\delta$. Hence, a partition restricted to $S$ on the equivalence class is identified with the restricted partition to $S$ and then the processes $(X_t|_S)_{t \geq 0}$ and $([X_t])_{t \geq 0}$ taking values on $\Y^*(\G_\rho) / \sim_S$ are also identified. In order that they satisfy the Markov property we need to check that $Q_{\delta, [\delta']} := \sum_{\hat{\delta} \in [\delta']} Q_{\delta, \hat{\delta}} $ is equal to $Q_{\delta_1,[\delta']}$ for every element $\delta_1 \in [\delta]$. When this property is satisfied, it is straightforward that $Q_{[\delta],[\delta']} := Q_{\delta,[\delta']}$ is the generator of the process $(X_t|_S)_{t \geq 0}$. 

\medskip

Let us check that property. Take $\delta,\delta' \in \Y^*(\G_\rho)$ such that $\delta|_S \not = \delta'|_S$. If $\delta|_S \not \lt  \delta'|_S$ it is clear that $Q_{\delta,[\delta']} = 0$. So, let $\delta|_S \lt \delta'|_S$ and take $\delta_1 \in [\delta]$. We have
\[ Q_{\delta_1, [\delta']} := \sum_{\hat{\delta} \in [\delta']} Q_{\delta_1, \hat{\delta}} = \sum_{\hat{\delta} \in [\delta']} \sum_{ \substack{\gamma \in \G_\rho: \\ \delta_1 \lt_a^{\gamma} \hat{\delta}}} \rho_{\gamma} =  \sum_{\substack{ \hat{\delta} \in \Y^*(\G_\rho): \\ \hat{\delta} |_S = \delta'|_S  }} \sum_{\substack{\gamma \in \G_\rho: \\ \delta_1 \lt_a^\gamma \hat{\delta}}} \rho_\gamma = \sum_{\substack{\gamma' \in \G_\rho: \\ \delta|_S \lt_{a}^{\gamma'} \delta'|_S }} \sum_{\substack{\gamma \in \G_\rho: \\ \gamma|_S = \gamma'}} \rho_\gamma.\]
In the last equality we have used that, since $\delta|_S \not = \delta'|_S$ the atoms $a(\delta,\delta')$ and $a(\delta_1,\hat{\delta})$ only contain sites of $S$. Since the last expression does not depend on $\delta_1$ but only on $\delta$ we get that $Q_{\delta_1, [\delta']} = Q_{\delta, [\delta']}$. Moreover:
\[ Q_{\delta_1,[\delta']} =  \sum_{\substack{\gamma' \in \G_\rho: \\ \delta|_S \lt_{a}^{\gamma'} \delta'|_S }} \sum_{\substack{\gamma \in \G_\rho: \\ \gamma|_S = \gamma'}} \rho_\gamma = \sum_{\substack{\gamma' \in \G_\rho: \\ \delta|_S \lt_{a}^{\gamma'} \delta'|_S }} \rho_{\gamma'}^S, \]
and so the process $(X_t|_S)_{t \geq 0}$ have exactly the generator of a fragmentation process with the marginalized rates.
\end{proof}

A a direct consequence of the last proposition is the following result.
\begin{proposition} \label{prop:FMA}
Let $S \subseteq I$. Then for all $\delta \in \Y^*(\G_\rho)$ such $\{ S \} \in \delta$ we have:
\begin{equation*}\label{FROMMARG}
\PP_\delta (X_t |_S = \{ S \})= \exp \left(-t \sum_{\substack{\gamma \in \G_\rho: \\ \gamma|_S \not = \{ S \}}} \rho_\gamma \right),
\end{equation*}
quantity that does not depend on the partition $\delta$ with $\{S\} \in \delta$.
\end{proposition}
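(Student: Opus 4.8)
The plan is to read this off directly from the marginalization proposition just proved. First I would note that since $\{S\}\in\delta$, the induced partition $\delta|_S$ is exactly $\{S\}$. By the previous proposition, under $\PP_\delta$ the restricted process $(X_t|_S)_{t\ge 0}$ is a fragmentation process on $\Y^*(\G_\rho|_S)$ with rates $(\rho^S_{\gamma})$, started from $\delta|_S=\{S\}$, which is the coarsest (root) state of $\Y^*(\G_\rho|_S)$. Since, by the lumping construction in that proof, the law of $(X_t|_S)_{t\ge 0}$ depends on the initial partition only through its class $[\delta]$, i.e.\ only through $\delta|_S=\{S\}$, the independence of the quantity from the particular $\delta$ with $\{S\}\in\delta$ is already established.

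Next I would use the structural fact recorded in Remark \ref{rem:ia}: the fragmentation process only ever jumps to strictly finer partitions, so once it leaves a state it never returns to it; the same applies to the (marginalized) fragmentation process $(X_t|_S)_{t\ge 0}$. Hence the event $\{X_t|_S=\{S\}\}$ coincides with the event that the restricted chain has made no jump up to time $t$. For a continuous-time Markov chain on a finite state space started at a state $x$, this survival probability is $e^{t Q_{xx}}$; applying this with $x=\{S\}$ and with $Q^S$ the generator of the marginalized fragmentation process gives $\PP_\delta(X_t|_S=\{S\})=\exp\!\big(t\,Q^S_{\{S\},\{S\}}\big)$.

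It then remains to compute the holding rate $-Q^S_{\{S\},\{S\}}$, which by definition of the generator is the total rate of transitions out of $\{S\}$ in $\Y^*(\G_\rho|_S)$, namely $\sum_{\gamma'\in\AS(S),\ \gamma'\neq\{S\}}\rho^S_{\gamma'}$; here I use that any transition out of $\{S\}$ must fragment its unique atom $S$, i.e.\ is of the form $\{S\}\lt_S^{\gamma'}\gamma'$ with $\gamma'=\gamma|_S\neq\{S\}$. Unfolding $\rho^S_{\gamma'}=\sum_{\gamma\in\G_\rho:\ \gamma|_S=\gamma'}\rho_\gamma$ and summing over all $\gamma'\neq\{S\}$ reassembles this into $\sum_{\gamma\in\G_\rho:\ \gamma|_S\neq\{S\}}\rho_\gamma$, with no double counting since each $\gamma\in\G_\rho$ has a single restriction $\gamma|_S$. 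Substituting yields $\PP_\delta(X_t|_S=\{S\})=\exp\!\big(-t\sum_{\gamma\in\G_\rho:\ \gamma|_S\neq\{S\}}\rho_\gamma\big)$, as claimed.

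I do not expect a genuine obstacle here: the whole point is that the marginalization proposition has already done the work, and the only steps requiring a moment's care are (i) observing $\{S\}\in\delta\Rightarrow\delta|_S=\{S\}$, (ii) invoking the ``no return'' property to identify $\{X_t|_S=\{S\}\}$ with the no-jump event, and (iii) verifying the double sum over $\gamma'$ and then over $\gamma$ with $\gamma|_S=\gamma'$ collapses cleanly to the single sum in the statement.
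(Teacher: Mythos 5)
Your proof is correct and follows exactly the route the paper intends: the paper states this proposition without proof as ``a direct consequence'' of the marginalization result, and your argument fills in precisely the expected details (the lumping gives independence of $\delta$, the no-return property identifies $\{X_t|_S=\{S\}\}$ with the no-jump event, and the holding rate of the marginalized chain at $\{S\}$ collapses to $\sum_{\gamma\in\G_\rho:\,\gamma|_S\neq\{S\}}\rho_\gamma$).
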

We note that Proposition \ref{prop:FMA} is just a rigorous statement of Remark \ref{rem:ia}. Proposition \ref{prop:FMA} allows to define the function $\lambda_S^S (t) := \PP_{\delta}(X_t|_S = \{ S\})$ which is independent of $\delta$ such that $\{ S \} \in \delta$. This is the exponential property of the holding time of the marginal fragmentation process in $\{ S \}$. Now, since the fragmentation process acts independently on each of its atoms we can extend the definition of $\lambda$ for every $\delta \in \AS(I)$ and $S \subseteq I$ as:
\[ \lambda_{\delta}^{S}(t) := \PP(X_t|_S = \delta \, | \, X_0 = \delta \cup \{S^c \})=\prod_{a \in \delta} \lambda_a^a(t).\]
We will use $\lambda_\delta^S := \lambda_\delta^S (1)$. From Proposition \ref{prop:FMA} we have that $\lambda_\delta^S(t) = (\lambda_\delta^S)^t$. At this point, we require the notion of erasing some part of a tree.

\begin{definition}
Given a tree $T=(\GG,E,\delta_0)$ fulfilling (ORT), $\alpha \in \GG, H \subseteq E$, we denote by $T^I_\alpha(H)$ the fragmentation tree when erasing $H$ from the subtree with root $\alpha$ of $T$, and using the algorithm to transform it to a fragmentation tree in the same manner as Definition \ref{def:tort}. We will denote by $I_\alpha := \bigcup_{L \in \alpha} L$ the sites that are present on the tree, and by $\aL_{\GG_\alpha(H)}$ the leaves of this tree. It follows that $\aL_{\GG_\alpha(H)}$ is a partition of $I_\alpha$.
\end{definition}
In Figure \ref{fig:EJA} we provide an example of these trees, based on the tree in Figure \ref{fig:EJ}.

\medskip

\begin{figure}[h]
\begin{subfigure}[b]{0.4\textwidth}
       \centering
        \resizebox{\linewidth}{!}{
           \begin{tikzpicture}[auto, node distance=2.5cm, every loop/.style={},thick, main node/.style={ellipse, draw,font=\sffamily\bfseries}]

  \node[main node] (0) []{$\{ \{1,6,7\} \}$};
  \node[main node] (1) [below left of=0, xshift = 1.75 cm] {$\{ \{1\},\{6,7\} \}$ };
  \node[main node] (6) [below of=1, xshift = -1.5 cm, yshift= 1 cm]{ $\{1 \}$ };
  \node[main node] (7) [below of=1, xshift = 1.5 cm, yshift= 1 cm]{$\{6,7 \}$};
  
  \path[every node/.style={font=\sffamily\small}]
    (0) edge node [] {}  (1)
    (1)edge[red] node [left] {} (6) 
       edge[red] node [right] {} (7); 
\end{tikzpicture}
}
        \caption{Example 1: $T_{\alpha_1}^{I}(H_1)$.}
        \label{fig:SUB1A}
    \end{subfigure}
    \begin{subfigure}[b]{0.6\textwidth}
    \centering
        \resizebox{\linewidth}{!}{
           \begin{tikzpicture}[auto, node distance=2.5cm, every loop/.style={},thick, main node/.style={ellipse, draw,font=\sffamily\bfseries}]

  \node[main node] (11) [yshift=0.75 cm]{$\{ \{1,2,3,4,5,6,7\} \}$};
  \node[main node] (0) [below of=11, yshift=0.75 cm]{$\{ \{1,6,7\}, \{2,3,4\}, \{5\} \}$};
  \node[main node] (1) [below left of=0, xshift = -2 cm] {$\{1,6,7\}$ };
  \node[main node] (3) [below of=0, yshift = 0.6 cm] {$\{2,3,4\}$};

  \node[main node] (4) [below of=0, xshift = 3 cm, yshift= 0.6 cm]{$\{5\} $};

  \path[every node/.style={font=\sffamily\small}]
    (0) edge[red] node [left] {}  (1)
        edge[red] node [left] {} (3)
        edge[red] node [right] {} (4) 
    (11) edge[red] node [left] {} (0);
\end{tikzpicture}
        }
        \caption{Example 2: $T_{\alpha_2}^{I}(H_2)$.}   
        \label{fig:SUB2A}
    \end{subfigure}
\caption{In this example we use the same tree as in Figure \ref{fig:EJ} for showing $T_\alpha^I(H)$. In the first example $\alpha_1 = \{ \{1\},\{6,7\}\}$ and $H_1 = \{(\{\{1\},\{6,7\}\},\{\{6\},\{7\}\})\}$. In the second one $\alpha_2$ is the original root and $H_2$ are all the edges that are incident to it.}
\label{fig:EJA}
\end{figure}
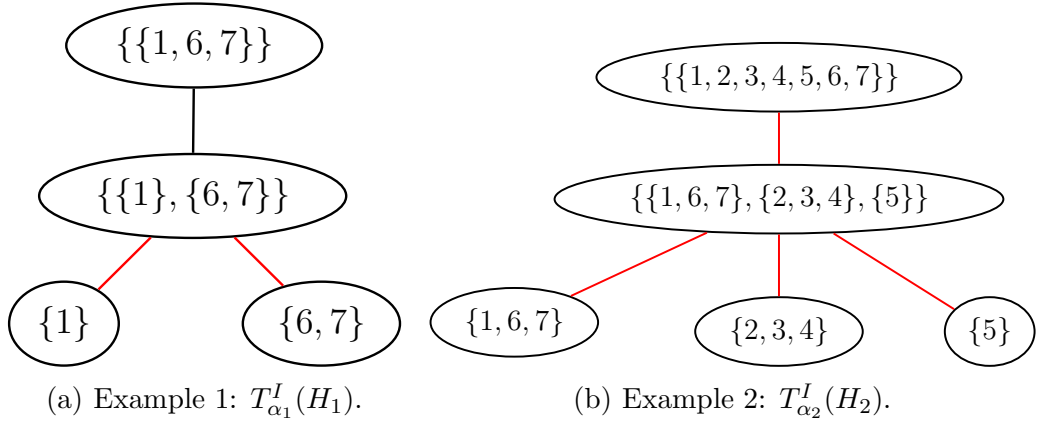

Now for a fragmentation tree $T^I = (\hat{\GG},\hat{E}, \delta_0)$, $\alpha \in \GG$ and $H \subseteq E$ we define:
\[ \lambda_{\GG_\alpha (H)}^{I_\alpha} := \lambda_{\aL_{\GG_\alpha}(H)}^{I_\alpha}. \]
By using the Proposition \ref{prop:FMA} it is easy to check that if we have $J,K \subseteq I$ with $J \cap K = \emptyset$ and $\delta \in \AS(J), \delta' \in \AS(K)$ then:
\[ \lambda_\delta^{J} (t) \lambda_{\delta'}^{K}(t) = \lambda_{\delta \cup \delta'}^{J \cup K} (t). \]
\begin{remark}
As seen in \cite{lyu} the functions $(\lambda_\delta^I)_{\delta \in \Y^*(\G_\rho)}$ are the eigenvalues of the semigroup of the fragmentation process. 
\end{remark}
We have all the elements that are necessary to supply a formula for the law of the fragmentation process.
\begin{theorem} \label{LAWF}
Let $T^I =(\hat{\GG},\hat{E},\delta_0)$ be a fragmentation tree. If $|\GG| = 0$, then $\PP(\F_t(T^I)) = (\lambda_I^I)^t$. Assume $|\GG|>0$. Let us suppose $\lambda_{\GG_\alpha(H)}^{I_\alpha} \not = \lambda_{I_\alpha}^{I_\alpha}$ for all $\alpha \in \GG, H \subseteq E$. Then:
\begin{equation*}\PP(\F_t(T^I)) = \sum_{H \subseteq E} (-1)^{|H|} [ (\lambda_{\GG_{{\delta_0}(H)}}^{I} )^t - (\lambda_{I}^{I} )^t] \prod_{\beta \in \GG} \frac{ \rho^{I_\beta}_\beta  }{ \log( \lambda_{\GG_\beta(H)}^{I_\beta } ) - \log( \lambda_{ I_\beta  }^{I_\beta } ) },
\end{equation*}
where $\rho_\beta^{I_\beta}$ is the marginalized recombination rate defined in (\ref{marginalrate}).
\end{theorem}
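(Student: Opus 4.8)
The plan is to prove, by structural induction on the tree, a statement slightly more general than the one stated: for every node $\alpha\in\GG$ put $I_\alpha=\bigcup_{L\in\alpha}L$, let $E_\alpha$ be the original edges lying in the subtree of $T$ rooted at $\alpha$, let $T^{I_\alpha}_\alpha(\emptyset)$ be the fragmentation tree built from that subtree, and set $g_\alpha(t):=\PP_{\{I_\alpha\}}\big(\F_t(T^{I_\alpha}_\alpha(\emptyset))\big)$ for the fragmentation process on $I_\alpha$ with marginalized rates $(\rho^{I_\alpha}_{\gamma'})$ (this is legitimate: by the consistency by marginalization property $\alpha\in\G_\rho|_{I_\alpha}$ and the restricted process is again a fragmentation process). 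I claim
\[
g_\alpha(t)=\sum_{H\subseteq E_\alpha}(-1)^{|H|}\big[(\lambda_{\GG_\alpha(H)}^{I_\alpha})^{t}-(\lambda_{I_\alpha}^{I_\alpha})^{t}\big]\prod_{\beta\in\GG_\alpha}\frac{\rho^{I_\beta}_\beta}{\log(\lambda_{\GG_\beta(H)}^{I_\beta})-\log(\lambda_{I_\beta}^{I_\beta})},
\]
which is the theorem when $\alpha=\delta_0$. The degenerate case $|\GG|=0$ is handled at once: then $\F_t(T^I)=\{X_t=\{I\}\}$, so Proposition~\ref{prop:FMA} with $S=I$ gives $\PP(\F_t(T^I))=e^{-t|\rho|}=(\lambda_I^I)^t$. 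The base of the induction is $|\GG_\alpha|=1$ (so $\alpha$ has no child in $\GG$), where the formula below reduces to a single elementary integral that I would verify by a one-line computation; it is in fact a special case of the induction step with an empty family of children.

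For the inductive step fix $\alpha\in\GG$ and write its atoms as $\mathcal N_\alpha$ (the branch atoms) together with $b_1,\dots,b_{m'}$, where $\beta_j\in\GG$ is the child of $\alpha$ with $L(\alpha,\beta_j)=b_j$ and hence $I_{\beta_j}=b_j$. Since $\F_t(T^{I_\alpha}_\alpha(\emptyset))$ is the event that the embedded jump chain on $I_\alpha$ is coded by the tree, on it the first fragmentation of $I_\alpha$ is according to $\alpha$; thus the process jumps from $\{I_\alpha\}$ to $\alpha$ at a time $s\in[0,t]$, which occurs at rate $\rho^{I_\alpha}_\alpha$ against the $\mathrm{Exp}(\Lambda_\alpha)$ holding time at $\{I_\alpha\}$, with $\Lambda_\alpha=\sum_{\gamma\in\G_\rho:\,\gamma|_{I_\alpha}\ne\{I_\alpha\}}\rho_\gamma=-\log\lambda_{I_\alpha}^{I_\alpha}$. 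By the strong Markov property and, once more, consistency by marginalization, given $X_s|_{I_\alpha}=\alpha$ the marginal chains on the atoms of $\alpha$ are independent fragmentation processes, and (as anticipated in Remark~\ref{rem:ia}) the event $\F_t(T^{I_\alpha}_\alpha(\emptyset))$ factorizes over them: each branch atom $b\in\mathcal N_\alpha$ must stay at $\{b\}$ up to time $t$, of probability $\lambda^b_b(t-s)$ by Proposition~\ref{prop:FMA}, and each $b_j$ must realize the subtree $\GG_{\beta_j}$, of probability $g_{\beta_j}(t-s)$. Writing $J=\bigcup_{b\in\mathcal N_\alpha}b$, $\delta^{\mathrm{lf}}=\mathcal N_\alpha$ as a partition of $J$, so $\prod_{b\in\mathcal N_\alpha}\lambda^b_b(u)=(\lambda^{J}_{\delta^{\mathrm{lf}}})^{u}$ (empty product $=1$ if $\mathcal N_\alpha=\emptyset$), and substituting $u=t-s$ yields the recursion
\[
g_\alpha(t)=\rho^{I_\alpha}_\alpha\, e^{-\Lambda_\alpha t}\int_0^t e^{\Lambda_\alpha u}\,(\lambda^{J}_{\delta^{\mathrm{lf}}})^{u}\prod_{j=1}^{m'}g_{\beta_j}(u)\,du .
\]

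Into this I would substitute the inductive formula for each $g_{\beta_j}(u)$, then expand the product $\prod_j\big[(\lambda_{\GG_{\beta_j}(H_j)}^{b_j})^u-(\lambda_{b_j}^{b_j})^u\big]$ over the $2^{m'}$ choices of, for each $j$, the first term (say $j\in K$) or the second, and use the multiplicativity $\lambda^{P}_{\sigma}\lambda^{P'}_{\sigma'}=\lambda^{P\cup P'}_{\sigma\cup\sigma'}$ to merge all per-atom factors, including $(\lambda^{J}_{\delta^{\mathrm{lf}}})^u$, into one $(\lambda^{I_\alpha}_{\sigma})^u$ with $\sigma=\delta^{\mathrm{lf}}\cup\bigcup_{j\in K}\aL_{\GG_{\beta_j}(H_j)}\cup\bigcup_{j\notin K}\{b_j\}$. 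The combinatorial core is then: (i) $\sigma=\aL_{\GG_\alpha(H)}$ for $H=\{(\alpha,\beta_j):j\notin K\}\cup\bigcup_{j}H_j\subseteq E_\alpha$; (ii) $|H|=(m'-|K|)+\sum_j|H_j|$, so the sign $(-1)^{m'-|K|}\prod_j(-1)^{|H_j|}$ produced by the expansion is $(-1)^{|H|}$; and (iii) the coefficient factorizes as $\prod_{\beta\in\GG_\alpha}\rho^{I_\beta}_\beta/(\log\lambda_{\GG_\beta(H)}^{I_\beta}-\log\lambda_{I_\beta}^{I_\beta})$ equals $\big(\rho^{I_\alpha}_\alpha/(\log\lambda^{I_\alpha}_\sigma-\log\lambda^{I_\alpha}_{I_\alpha})\big)\prod_j\prod_{\beta\in\GG_{\beta_j}}\rho^{I_\beta}_\beta/(\log\lambda_{\GG_\beta(H_j)}^{I_\beta}-\log\lambda_{I_\beta}^{I_\beta})$, because $\GG_\beta(H)$ depends on $H$ only through $H\cap E_\beta$. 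Each surviving summand then contributes $\rho^{I_\alpha}_\alpha e^{-\Lambda_\alpha t}\int_0^t e^{u(\Lambda_\alpha+\log\lambda^{I_\alpha}_\sigma)}\,du$; since $\Lambda_\alpha+\log\lambda^{I_\alpha}_\sigma=\log\lambda^{I_\alpha}_\sigma-\log\lambda^{I_\alpha}_{I_\alpha}$, nonzero exactly by the hypothesis $\lambda_{\GG_\alpha(H)}^{I_\alpha}\ne\lambda_{I_\alpha}^{I_\alpha}$, this integral is $\big[(\lambda^{I_\alpha}_\sigma)^t-(\lambda^{I_\alpha}_{I_\alpha})^t\big]/(\log\lambda^{I_\alpha}_\sigma-\log\lambda^{I_\alpha}_{I_\alpha})$, and re-indexing the sum over $(K,(H_j)_j)$ as $\sum_{H\subseteq E_\alpha}$ gives precisely the claimed formula.

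The step I expect to be the main obstacle is the reorganization just described together with the event decomposition it rests on: making rigorous the equivalence between "realizing $T^{I_\alpha}_\alpha(\emptyset)$" and the product event "jump to $\alpha$, then each child atom independently realizes its own subtree" (which uses the interpretation of $\F_t$ via the embedded jump chain and the independence of atoms), and verifying carefully the leaf-partition identity $\sigma=\aL_{\GG_\alpha(H)}$ and the sign count. By contrast, the solution of the scalar linear ODE / the computation of $\int_0^t e^{au}du$ and the identity $\Lambda_\alpha=-\log\lambda^{I_\alpha}_{I_\alpha}$ are routine, and the hypothesis on the $\lambda$'s is exactly what guarantees that no denominator or exponent degenerates.
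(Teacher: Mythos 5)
Your proposal is correct and follows essentially the same route as the paper's proof: induction on the tree, conditioning on the first jump time from $\{I_\alpha\}$ to $\alpha$ (yielding the same integral recursion, which is the paper's equation (\ref{eq:inductivef}) after the substitution $u\mapsto t-u$), substitution of the inductive formula for each child subtree, expansion of the product of differences over subsets of children, merging of the $\lambda$-factors by multiplicativity, and the re-indexing $H=\bigcup_j H_j\cup\{(\alpha,\beta_j):j\notin K\}$ with the sign count $|H|=(m'-|K|)+\sum_j|H_j|$. The only cosmetic differences are that you carry the leaf atoms explicitly via $(\lambda^{J}_{\delta^{\mathrm{lf}}})^u$ where the paper assumes without loss of generality that the root's children are not leaves, and that you phrase the induction through the marginalized process on $I_\alpha$ rather than through the events $\F_{t-u}(T^{I_a})$ of the full process.
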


\begin{proof}
The proof goes by induction. The case $|\GG|=0$ follows from definition. We proceed to the case $|\GG|=1$. By the Markov property, the definition of $\F_t(T^{I})$ and the fact that on the fragmentation process the sites which have been split are independent, we obtain
\[ \mathbb{P}(\mathcal{F}_t(T^{I})) = \int_{0}^{t} \mathbb{P}(X_{T_1} = \delta_0) \left[\prod_{a \in \delta_0} \mathbb{P}_{\delta_0} (\mathcal{F}_{t-u}(T^{I_{a}})) \right] d \mathbb{P}(T_1 = u). \]
Note that by the definition of the fragmentation process we have:
\[ \mathbb{P}(X_{T_1} = \delta_0) = \frac{ \rho_{\delta_0}  }{ |\rho| }  \: \: \: \: \text{and} \]
\[ d \mathbb{P}(T_1 = u) = |\rho| e^{- |\rho| u} d u = |\rho| \lambda_I^I(u) du. \]
So, we get
\begin{equation} \label{INDUCT}  \mathbb{P}(\mathcal{F}_t(T^{I})) = \int_{0}^{t}  \rho_{\delta_0}  \left[\prod_{a \in \delta_0} \mathbb{P}_{\delta_0} (\mathcal{F}_{t-u}(T^{I_{a}})) \right]  \lambda_I^I (u) d u. \end{equation}
Moreover, since $|\GG| = 1$ we obtain:
\[  \mathbb{P}_{\delta_0} (\mathcal{F}_{t-u}(T^{I_{a}})) = \mathbb{P}_{\delta_0} (X_{t-u}|_{I_{a}} = \{ I_{a} \} ) = \lambda_{ I_{a} }^{I_{a}} (t-u). \]
Then, by replacing these terms in formula (\ref{INDUCT}) and by using the properties of the functions $\lambda$ we have that:
\begin{align*} \mathbb{P}(\mathcal{F}_t(T^I)) &= \rho_{\delta_0} \int_0^t \lambda_{I}^I  (u) [\prod_{a \in \delta_0} \lambda_{ I_{a}  }^{I_{a}} (t-u)] du = \rho_{\delta_0} \int_{0}^{t} \lambda_I^I (u) \lambda_{ \cup_{a \in \delta_0} \{ I_{a} \} }^{I} (t-u) du  \\ &= \rho_{\delta_0}  \int_{0}^{t} ( \lambda_I^I)^u ( \lambda_{\delta_0}^{I} )^{t-u} du =  \rho_{\delta_0} \left( \frac{(\lambda_{\delta_0}^{I})^t -  (\lambda_{I}^{I})^t}{ \log(\lambda^I_{\delta_0} )- \log(\lambda_{I}^{I})} \right). \end{align*}
In the calculation of the integral we have used the hypothesis made on the functions $\lambda$. Note that $|\GG|=1$ implies $E = \emptyset$, and so $\rho_{\delta_0} = \rho_{\delta_0}^I$. Hence the last computation coincides with the formula stated in the Theorem. 

\medskip

Now we proceed to the inductive step. First, without lose of generality we can assume that the children of the root $\delta_0$ are not leaves. This does not change the proof and makes the notation easier. By repeating the last computations we arrive to:
\begin{equation} \label{eq:inductivef} \PP(\mathcal{F}_t(T^{I})) = \int_{0}^{t}  \rho^I_{\delta_0}  \left[\prod_{a \in \delta_0} \PP_{\delta_0} (\mathcal{F}_{t-u}(T^{I_{a}})) \right]  \lambda_I^I (u) d u. \end{equation}
By using the induction hypothesis, this quantity is equal to:
\begin{align*} \int_{0}^{t} \rho^I_{\delta_0} (\lambda_I^I)^u \left( \prod_{a \in \delta_0}\sum_{H_a \subseteq E_{a}} (-1)^{|H_a|} [(\lambda_{\GG_{a}(H_i)}^{I_{a}})^{t-u} - (\lambda_{I_{a}}^{I_{a}})^{t-u}] \prod_{\beta  \in \GG_{a}} \frac{\rho^{I_\beta }_\beta }{\log(\lambda_{\GG_\beta (H_a)}^{I_\beta }) - \log(\lambda_{I_\beta }^{I_\beta }) } \right) du, \end{align*}
where $\GG_{a}$, $E_{a}$ are set of nodes and edges of the tree $T^{I_{a}}$, respectively. Now, by using distribution of the sum we arrive to the following expression for $\mathbb{P}(\F_t(T^I))$:
 \begin{align*} \int_{0}^{t} \rho^I_{\delta_0} (\lambda_I^I)^u & \left( \sum_{a \in \delta_0, H_a \subseteq E_{a}} (-1)^{\sum_{a \in \delta_0} |H_a|} \prod_{a \in \delta_0} [ (\lambda_{\GG_{a}(H_a)}^{I_{a}} )^{t-u} - (\lambda_{I_{a}}^{I_{a}})^{t-u} ] \right. \\ & \left. \times \prod_{a \in \delta_0} \prod_{\beta  \in \GG_{a} }  \frac{ \rho^{I_\beta }_\beta  }{ \log(\lambda_{\GG_\beta  (H_a)}^{I_\beta }) - \log(\lambda_{I_\beta }^{I_\beta }) } \right) du. \end{align*}
 Now, use the identity
 \[ \prod_{a \in \delta_0} [ (\lambda_{\GG_{a}(H_a)}^{I_{a}} )^{t-u} - (\lambda_{I_{a}}^{I_{a}})^{t-u} ] =  \sum_{\delta_1 \subseteq \delta_0} (-1)^{|\delta_0|-|\delta_1|} (\lambda_{[\cup_{a \in \delta_1} \GG_{a} (H_a)] \cup [ \cup_{a \in \delta_0 \setminus \delta_1} \{ I_{a} \} ] }^I )^{t-u}, \]
to get the equality
 \begin{align*} &\int_{0}^{t} (\lambda_I^I)^u \prod_{a \in \delta_0} [ (\lambda_{\GG_{a}(H_a)}^{I_{a}} )^{t-u} - (\lambda_{I_{a}}^{I_{a}})^{t-u} ] du \\ &= \sum_{\delta_1 \subseteq \delta_0 } (-1)^{|\delta_0|-|\delta_1|} \int_{0}^{t} (\lambda_I^I)^u (\lambda_{[\cup_{a \in \delta_1} \GG_{a_i} ( H_i)] \cup [ \cup_{a \in \delta_0 \setminus \delta_1} \{ I_{a} \} ] }^I )^{t-u} du \\ &= \sum_{\delta_1 \subseteq \delta_0 } (-1)^{|\delta_0|-|\delta_1|} \frac{(\lambda_{[\cup_{a \in \delta_1} \GG_{a} ( H_a)] \cup [ \cup_{a \in \delta_0 \setminus \delta_1} \{ I_{a} \} ] }^I )^t - (\lambda_I^I)^t}{ \log(\lambda_{[\cup_{a \in \delta_1} \GG_{a} ( H_a)] \cup [ \cup_{a \in \delta_0 \setminus \delta_1} \{ I_{a} \} ] }^I ) - \log(\lambda_I^I) }, \end{align*}
 where we used the hypothesis over $\lambda$ to compute the integral. So, from this expression and by making some manipulation over the sums we get that $\PP(\F_t(T^I))$ is equal to:
 \begin{align}\label{terminofeo} \nonumber \rho^I_{\delta_0}\sum_{\delta_1 \subseteq \delta_0 } \sum_{a \in \delta_0: H_a \subseteq E_{a}} (-1)^{\sum_{a \in \delta_0|} |H_a|}  & (-1)^{|\delta_0|-|\delta_1|} \frac{(\lambda_{[\cup_{a \in \delta_1} \GG_{a} (H_a)] \cup [ \cup_{a  \in \delta_0 \setminus \delta_1} \{ I_{a} \} ] }^I )^t - (\lambda_I^I)^t}{ \log(\lambda_{[\cup_{a \in \delta_1} \GG_{a} (H_a)] \cup [ \cup_{a \in \delta_0 \setminus \delta_1} \{ I_{a_i} \} ] }^I ) - \log(\lambda_I^I) } \\ & \times \prod_{a \in \delta_0} \prod_{\beta  \in \GG_{a} }  \frac{ \rho^{I_\beta }_\beta  }{ \log(\lambda_{\GG_\beta  (H_a)}^{I_\beta }) - \log(\lambda_{I_\beta }^{I_\beta }) }]. \end{align}
Now, we perform the change of variable $H = [\cup_{a \in \delta_0} H_a] \cup [\cup_{a \in  \delta_0 \setminus \delta_1} e_a]$ where $e_a$ is the edge that connects $\delta_0$ with its children with sites $I_{a}$. Then, we get $(\lambda_{[\cup_{a \in \delta_1} \GG_{a} ( H_a)] \cup [ \cup_{i \in \delta_0 \setminus \delta_1} \{ I_{a} \} ] }^I) = \lambda_{\GG_{\delta_0} (H)}^{I}$. Observe that the sum over $\{e_a: a \in \delta_0 \setminus \delta_1 \}$ and over the set of edges $H$ runs over all $E$. Finally, we use 
\[ |H| = |\cup_{a \in \delta_0} H_a| + | \cup_{a \in \delta_0 \setminus \delta_1} e_a| = \sum_{a \in \delta_0} |H_a| + |\delta_0| - |\delta_1|,\]
in the expression (\ref{terminofeo}) to get the result.
\end{proof}

\begin{remark}
Since the solutions of equation (\ref{EDOREC}) depend explicitly on $\PP(\F_t(T^I))$ the last theorem supplies an expression for the solutions. In contrast to the solutions found in \cite{bbs} these formulae are non-recursive, but depend on the structure of the fragmentation trees.
\end{remark}

\begin{remark}
The last theorem can be used to deduce the form of the solution even when the hypothesis on the functions $\lambda$ does not apply. This works as follows. Consider the set of nodes closest to the root that fulfils the hypothesis, that is, 
\[ \mathcal{U} = \{ \beta \in \GG | \exists \alpha \in \GG, \text{ with } (\alpha,\beta) \in E, \text{ and for some } H \subseteq E,  \lambda_{\GG_\alpha(H)}^{I_\alpha} = \lambda_{I_\alpha}^{I_\alpha} \}.  \]
Theorem \ref{LAWF} can be used on every $\beta \in \mathcal{U}$, giving $\PP(\mathcal{F}_{t}(T^{I_\beta}))$. For every node that has its offspring in $\mathcal{U}$ we can extend this formula, using equality (\ref{eq:inductivef}). This can continue by induction until we get to the root, giving $\PP(\mathcal{F}_{t}(T^{I}))$. We do not state this general formula because we were unable to find a pattern that works in a general way.
\end{remark}

\begin{remark}
Some conditions can be stated under which the formula for Theorem \ref{LAWF} applies. The simpler one, and that follows from the definition of the functions $\lambda$ and Proposition \ref{prop:FMA}, is for the quantities $(\rho_{\delta})_{\delta \in \mathcal{G}_\rho}$ to be linearly independent over $\mathbb{Z}$. That is, for every with $\delta_1,\delta_2,..., \delta_k \in \mathcal{G}_\rho$ all different, there does not exists $x_1,.., x_k \in \mathbb{Z}$ such that $\sum_{i=1}^{k} \rho_{\delta_i} x_i = 0$.
\end{remark}

\begin{remark}
It can be observed that the structure of the formulae of $\PP(\F_t(T^I))$ is reminiscent of an inclusion-exclusion. This is to be expected, as it has been studied and proven \cite{prune} to be the case for single-crossover. It remains an open problem to give the same interpretation in the general partition framework we have developed.
\end{remark}

\medskip

\section{Limit behaviour} \label{Sec:lim}

First, we start by stating the stationary behaviour of the fragmentation process. We also supply the consequences it has for solutions of equation (\ref{EDOREC}).

\medskip

For this we recall some notation for Markov processes. For $U \subseteq \Y^*(\G_\rho)$ $\tau_U$ denote the time at which $(X_t)_{t \geq 0}$ hits $U$ and for $\delta \in \Y^*(\G_\rho)$ we denote $\tau_\delta := \tau_{\{ \delta \}}$. We are interested in studying $\tau_{\gamma^{\G_\rho}}$, as $\gamma^{\G_\rho}$ is the unique absorbing state for the process. For simplicity we put $\tau:= \tau_{\gamma^{\G_\rho}}$.

\begin{theorem} \label{stationary}
Let $\mu \in \mathcal{P}_I$ and $\bar{\mu} = \bigotimes_{J \in \gamma^{\G_\rho}} \mu_J$. Then $\bar{\mu}$ is a stationary point for $\Xi$, that is, for all $t\geq0$, $ \Xi_t (\bar{\mu}) = \bar{\mu}$. Moreover $\mathbb{P}(\tau < \infty)=1$ and:
\begin{equation}\label{ATRACREC}  \lim_{t \rightarrow \infty} \Xi_t \mu = \bar{\mu}. \end{equation}
\end{theorem}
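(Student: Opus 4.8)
The plan is to prove the three assertions in order: stationarity of $\bar\mu$, almost-sure absorption, and the limit \eqref{ATRACREC}. For stationarity, I would use Theorem \ref{RECSOL}: since $\bar\mu = R_{\gamma^{\G_\rho}}(\mu)$ and $\gamma^{\G_\rho}$ is the absorbing state, one checks that $R_\delta(\bar\mu) = \bar\mu$ for every $\delta \in \Y^*(\G_\rho)$. Indeed, for any such $\delta$ one has $\delta \preceq \gamma^{\G_\rho}$, so each atom $L \in \delta$ is a union of atoms of $\gamma^{\G_\rho}$; by the stability of $\otimes$ under restriction \eqref{eab}, the marginal $(\bar\mu)_L = \bigotimes_{J \in \gamma^{\G_\rho},\, J \subseteq L} \mu_J$, and taking the product over $L \in \delta$ reassembles $\bigotimes_{J \in \gamma^{\G_\rho}} \mu_J = \bar\mu$. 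Then Theorem \ref{RECSOL} gives $\Xi_t\bar\mu = \sum_\delta \PP(X_t = \delta) R_\delta(\bar\mu) = \sum_\delta \PP(X_t=\delta)\bar\mu = \bar\mu$.

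For $\PP(\tau < \infty) = 1$: the fragmentation process is a continuous-time Markov chain on the finite set $\Y^*(\G_\rho)$ which, by Remark \ref{rem:ia}, is monotone --- every jump goes to a strictly finer partition --- and by the discussion following \eqref{eqg2}, $\gamma^{\G_\rho}$ is the unique state from which no further fragmentation is possible (it is the unique absorbing state, and every non-absorbing state has $Q_{\delta,\delta} < 0$). Since the state space is finite and the chain is strictly increasing in the refinement order (so it visits each state at most once and cannot cycle), it must reach $\gamma^{\G_\rho}$ after finitely many jumps, and each holding time is an a.s.\ finite exponential random variable; hence $\tau < \infty$ almost surely. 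Equivalently, $\PP(X_t = \gamma^{\G_\rho}) \to 1$ as $t \to \infty$, which one can also read off from Proposition \ref{prop:FMA} applied atom-by-atom on $\gamma^{\G_\rho}$.

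For the limit \eqref{ATRACREC}, I would combine Theorem \ref{RECSOL} with dominated convergence on the finite sum. Write $\Xi_t\mu = \sum_{\delta \in \Y^*(\G_\rho)} \PP(X_t = \delta)\, R_\delta(\mu)$. As $t \to \infty$, $\PP(X_t = \gamma^{\G_\rho}) \to 1$ and $\PP(X_t = \delta) \to 0$ for every $\delta \neq \gamma^{\G_\rho}$, by the previous paragraph. Since the index set is finite and each $R_\delta(\mu)$ is a fixed probability measure with $\|R_\delta(\mu)\| = 1$, the total-variation norm of $\Xi_t\mu - \bar\mu = \sum_{\delta}\PP(X_t=\delta)(R_\delta(\mu) - \bar\mu)$ is bounded by $\sum_{\delta \neq \gamma^{\G_\rho}} \PP(X_t=\delta)\,\|R_\delta(\mu) - \bar\mu\| \le 2\sum_{\delta \neq \gamma^{\G_\rho}}\PP(X_t = \delta) \to 0$, using $R_{\gamma^{\G_\rho}}(\mu) = \bar\mu$. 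This gives convergence in $(\mathcal{P}_I, \|\cdot\|)$, as claimed.

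The main obstacle is mostly bookkeeping rather than depth: one must verify carefully that $\bar\mu$ is genuinely fixed by every $R_\delta$ with $\delta \in \Y^*(\G_\rho)$ (not merely by $R_{\gamma^{\G_\rho}}$), which is where the associativity of $\otimes$ and the restriction identity \eqref{eab} do the real work, together with the fact that every $\delta \in \Y^*(\G_\rho)$ is coarser than $\gamma^{\G_\rho}$. The probabilistic input --- that a finite, strictly monotone CTMC is absorbed a.s.\ at its unique sink --- is standard and can be stated briefly, or deduced directly from Proposition \ref{prop:FMA}.
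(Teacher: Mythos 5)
Your proposal is correct and follows essentially the same route as the paper: stationarity via Theorem \ref{RECSOL} together with the fact that every $\delta\in\Y^*(\G_\rho)$ is coarser than $\gamma^{\G_\rho}$ (so $R_\delta(\bar\mu)=\bar\mu$), absorption from the finite, strictly refining chain with a.s.\ finite holding times, and the limit from $\PP(\tau\le t)\to 1$ applied to the finite sum in Theorem \ref{RECSOL}. The only cosmetic difference is that you bound $\|\Xi_t\mu-\bar\mu\|$ state by state in total variation, whereas the paper splits $\EE(\mu^{(t)})$ on the events $\{\tau\le t\}$ and $\{\tau>t\}$; the two are equivalent.
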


\begin{proof}
Let us see that $\bar{\mu}$ is a stationary point. Indeed, for every $t \geq 0$, we use Theorem \ref{RECSOL} to get: 
\begin{align*}\label{FIXPOINT} \Xi_t (\bar{\mu}) &= \sum_{\delta \in \mathcal{Y}^*(\mathcal{G}_\rho)} \mathbb{P}(X_t = \delta) \bigotimes_{L \in \delta} \bigotimes_{J \in \gamma^{\G_\rho}} \mu_{J \cap L} \\ &= \sum_{\delta \in \mathcal{Y}^*(\mathcal{G}_\rho)} \mathbb{P}(X_t = \delta) \bigotimes_{J \in \gamma^{\G_\rho}} \mu_J = \bar{\mu} \sum_{\delta \in \mathcal{Y}^*(\mathcal{G}_\rho)} \mathbb{P}(X_t = \delta) = \bar{\mu}, \end{align*}
where we have used $\delta \preceq \gamma^{\G_\rho}$ for all $\delta \in \Y^*(\G_\rho)$, and so:
\[ \bigotimes_{L \in \delta} \mu_{J \cap L} = \begin{cases} \mu_J & J \subseteq L, \\ \mu_{\emptyset} & in \: other \: case. \end{cases} \]
Hence the stationary of $\bar{\mu}$ is proven. Now, from Theorem \ref{RECSOL} we have
\begin{align*}
\Xi_t \mu = \mathbb{E}(\mu^{(t)}) = \mathbb{E}(\mu^{(t)}, \tau \leq t) + \mathbb{E}(\mu^{(t)}, \tau > t) = \bar{\mu} \mathbb{P}(\tau \leq t) + \mathbb{E}(\mu^{(t)}, \tau > t).
\end{align*}
So, to show (\ref{ATRACREC}) it suffices to prove that $\mathbb{P}(\tau < \infty)=1$. But this holds because on one hand once the Markov chain leaves a state it does never return to it, and on the other hand for every $\delta \not = \gamma^{\G_\rho}$ the sojourn time is almost surely finite.
\end{proof}

Now we will describe the quasi-limiting behaviour of the process. That is the study of the process conditioned to not hit its absorbing state. Quasi-limiting behaviour, along with a similar aspect that is quasi-stationarity, has been extensively studied for aperiodic and irreducible Markov chains. There are plenty of conditions that are sufficient to have existence and formulas of quasi-limiting distributions when avoiding some class of states, see \cite{sjc}. The information supplied by quasi-limiting distributions has a meaning depending on the set of forbidden states. In the study of population dynamics, quasi-limiting distributions appear naturally when a population is conditioned to avoid extinction. This is the context for the processes studied in \cite{quasiex}, and this happens in the vast majority of the literature devoted to populations dynamics. Also, the processes take values on $\NN$ and $\RR$, because they count the number of individuals, or take values on point measure sets as in \cite{cmmsm}. 

\medskip

There is some literature on quasi-limiting behaviour on reducible Markov chains, although much more sparse. We can mention \cite{pr2} for some interesting results and \cite{pr1} for a survey on quasi-limiting behaviour. In this literature conditions under which quasi-limiting distribution exists, and expression for these distributions, are given. However, the hypothesis needed are too strong on our case. In our context, they would translate into asking the Markov chain to have a unique state that achieves the maximal sojourn rate. Since we do not want to impose this restriction we are obliged to look for new techniques to understand the quasi-limiting behaviour of the fragmentation process.

\medskip

The difference in our work with these studies is that our process takes values on $\Y^*(\G_\rho)$, a set having a very special hierarchical structure. Furthermore, the process does never returns to a state that it leaves, which is different from the irreducibility hypothesis used in \cite{sjc} or in \cite{pp}. However, this same hierarchical structure is what gives a chance to generalizing the results from \cite{pr1}, \cite{pr2}.

\medskip

The study of the long time behaviour of the fragmentation process is interesting when we contrast it with our other results. For instance, Theorem \ref{LAWF} gives an insight of the fragmentation process law but it does not give a clue on its asymptotic behaviour since it seems unfeasible to take limit on the formula. On the other hand, with respect to Remark \ref{fragmeaning} our study answers to the question: Which is the shape of the genetic material of individuals, backwards in time,  when we condition to the fact that some genes can still be separated in recombination?

\medskip

Our study of the quasi-limiting behaviour uses a similar schema as the one developed in \cite{sm}. 

\medskip

In what follows we assume that $|\G_\rho|>1$ so the process is non-trivial. Following Theorem \ref{stationary} it is expected that after a long time the process arrives at the absorbing state $\gamma^{\G_\rho}$, this is the stationary behaviour. When this has not happened at some big-time $t$, it is expected that the process is in some state connected to the absorbing state and having the highest sojourn rate. This is just because having the highest sojourn is the expected way for the process to avoid hitting $\gamma^{\G_\rho}$ at all. With this in mind we define the set of states that can arrive to the absorbing state:
\[ \Delta = \{ \delta \in \Y^*(\G_\rho): \delta \lt \gamma^{\G_\rho}, \delta \not = \gamma^{\G_\rho} \}. \]
The highest sojourn rate on this set is given by:
 \[ \eta = - \max \{ Q_{\delta, \delta}: \delta \in \Delta\}. \]
We denote by $\V$ the set of states that have maximal sojourn rate on $\Delta$, that is:
\[ \V  = \{ \delta \in \Delta: Q_{\delta,\delta} = - \eta \}. \]
If $|\V| =1$ then we could use Theorem 5 of \cite{pr1} to easily get the quasi-limiting behaviour. However we do not want this restriction, so we only assume the obvious condition $|\V| \geq 1$.  We require to consider the highest sojourn rate outside $\V$:
\[  \beta_0 = -\max\{ Q_{\delta,\delta}: \delta \in \Y^*(\G_\rho), \delta \not = \gamma^{\G_\rho}, \delta \not \in \V \}. \]
As usual we we denote $P_{\delta,\delta'}^{t} = \PP( X_t = \delta' \, | \, X_0 = \delta)$ for $\delta, \delta' \in \Y^*(\G_\rho), t \geq 0$. We have the following result on sojourn probabilities.
\begin{lemma}\label{FSUM} 
Let $\delta \in \Y^*(\G_\rho)$, $\delta \not = \gamma^{\G_\rho}$, $\delta \not \in \Delta$. Then, for all $\delta' \in \Delta$ such that $ \PP_{\delta}(\tau_{\delta'} < \infty) > 0, $ we have:
\begin{enumerate}[label={(\roman*)}]
    \item $Q_{\delta,\delta} < Q_{\delta',\delta'}$
    \item $\forall \tilde{\delta} \in \V: \: -Q_{\tilde{\delta},\tilde{\delta}} =Q_{\tilde{\delta},\gamma^{\G_\rho}}$ and $t \geq 0 , \:  P_{\tilde{\delta},\tilde{\delta}}^t + P^t_{\tilde{\delta}, \gamma^{\G_\rho}} = 1.$
\end{enumerate}
\end{lemma}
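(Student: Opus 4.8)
The plan is to unpack the hypothesis that $\delta \not= \gamma^{\G_\rho}$ and $\delta \not\in \Delta$, i.e.\ that $\delta$ is not absorbing and is \emph{not} one step away from the absorbing state, and to combine this with the hierarchical, non-returning structure of the fragmentation process observed in Remark \ref{rem:ia}. The key structural fact I would use is that every jump of $(X_t)_{t\geq 0}$ strictly refines the current partition (the process only moves to finer partitions and never returns to a state it leaves), so any chain of states $\delta = \delta^{(0)} \lt \delta^{(1)} \lt \cdots \lt \delta^{(k)} = \delta'$ with $\PP_\delta(\tau_{\delta'} < \infty) > 0$ must have $\delta \preceq \delta'$ strictly, and in particular it passes through at least one intermediate state, because $\delta' \in \Delta$ means the \emph{next} jump from $\delta'$ reaches $\gamma^{\G_\rho}$, while from $\delta$ this cannot happen in one step.

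For part (i), I would argue that the sojourn rate is strictly increasing along refinements: if $\delta \lt \delta''$ with $\delta \not= \delta''$ then $-Q_{\delta,\delta} < -Q_{\delta'',\delta''}$, equivalently $Q_{\delta,\delta} > Q_{\delta'',\delta''}$ --- wait, I need to be careful about the sign. Since $Q_{\delta,\delta} = -\sum_{\gamma \in \G_\rho: \gamma|_{a} \not= \{a\} \text{ for some } a \in \delta}(\cdots)$, a finer partition has more atoms available to be fragmented, hence a larger total outflow rate, hence $-Q_{\delta'',\delta''} \geq -Q_{\delta,\delta}$; the inequality is strict whenever some atom of $\delta$ that can still be fragmented gets split in passing to $\delta''$, which is forced here because $\delta \not\in \Delta$ forces a genuinely intermediate step before reaching $\delta' \in \Delta \subseteq$ (states whose atoms are still fragmentable). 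The cleanest route is: $\PP_\delta(\tau_{\delta'} < \infty) > 0$ implies $\delta \lt^{*} \delta'$ (reachable) with $\delta \neq \delta'$, and monotonicity of the holding-rate functional along $\lt$, combined with the fact that a strict refinement that is reached by at least one fragmentation step strictly increases the set of still-fragmentable atoms (using that $\delta \notin \Delta$ while $\delta' \in \Delta$ rules out the degenerate case where nothing new becomes available), gives $Q_{\delta,\delta} < Q_{\delta',\delta'}$.

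For part (ii), I would use that $\tilde\delta \in \V \subseteq \Delta$, so by definition of $\Delta$ every atom of $\tilde\delta$ either is an atom of $\gamma^{\G_\rho}$ (hence cannot be fragmented further) or becomes an atom of $\gamma^{\G_\rho}$ in a single fragmentation step; more precisely $\tilde\delta \lt \gamma^{\G_\rho}$ means there is a unique atom $a = a(\tilde\delta,\gamma^{\G_\rho})$ and a $\gamma \in \G_\rho$ with $\tilde\delta \lt_a^\gamma \gamma^{\G_\rho}$. The point is that from $\tilde\delta$ the \emph{only} state the process can move to is $\gamma^{\G_\rho}$: any fragmentation of any atom of $\tilde\delta$ must land in $\Y^*(\G_\rho)$ and be $\preceq \gamma^{\G_\rho}$, but a partition strictly finer than $\tilde\delta$ and $\preceq \gamma^{\G_\rho}$ that is reachable in one step from $\tilde\delta$ must equal $\gamma^{\G_\rho}$ --- here I would invoke that $\tilde\delta \in \Delta$ precisely says the one-step fragmentations of $\tilde\delta$ exhaust into $\gamma^{\G_\rho}$, i.e.\ $\tilde\delta$ has no fragmentation strictly between itself and $\gamma^{\G_\rho}$. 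Hence $Q_{\tilde\delta,\tilde\delta'} = 0$ for all $\tilde\delta' \notin \{\tilde\delta, \gamma^{\G_\rho}\}$, so by the conservative property $-Q_{\tilde\delta,\tilde\delta} = Q_{\tilde\delta,\gamma^{\G_\rho}}$; and since the process restricted to the two states $\{\tilde\delta, \gamma^{\G_\rho}\}$ (with $\gamma^{\G_\rho}$ absorbing) is a pure-death chain, $P^t_{\tilde\delta,\tilde\delta} + P^t_{\tilde\delta,\gamma^{\G_\rho}} = 1$ for all $t \geq 0$, either by solving the $2\times 2$ Kolmogorov system explicitly or by noting $\PP_{\tilde\delta}(X_t \in \{\tilde\delta,\gamma^{\G_\rho}\}) = 1$ since the first jump out of $\tilde\delta$ goes directly to $\gamma^{\G_\rho}$.

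The main obstacle I anticipate is the precise justification in (i) that reachability $\PP_\delta(\tau_{\delta'} < \infty) > 0$ with $\delta \notin \Delta$ and $\delta' \in \Delta$ forces a \emph{strict} increase in holding rate rather than merely a weak one; this needs the observation that the last fragmentation step leading into $\delta'$ (or indeed any intermediate step) splits an atom that was still fragmentable in $\delta$, which I would extract from the definitions of $\lt$, of $\Delta$, and of the generator $Q$, possibly via the marginalization consistency (the Proposition before \ref{prop:FMA}) to reduce to comparing single-atom holding rates. The rest is bookkeeping with the conservative generator.
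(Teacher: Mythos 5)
Both parts of your proposal rest on claims that are false for this chain, so there are genuine gaps. For (i): you assert that ``a finer partition has more atoms available to be fragmented, hence a larger total outflow rate'', giving $-Q_{\delta'',\delta''}\geq -Q_{\delta,\delta}$ along refinements. First, this points the wrong way: the lemma claims $Q_{\delta,\delta}<Q_{\delta',\delta'}$, i.e.\ the \emph{coarser} state $\delta$ has the strictly \emph{larger} outflow rate $-Q_{\delta,\delta}$. Second, the monotonicity is simply false in general (the finest state $\gamma^{\G_\rho}$ has outflow $0$; refining an atom into atoms of $\gamma^{\G_\rho}$ kills its contribution to the outflow, whereas a $\gamma$ that splits several of the new atoms is counted once per atom). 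What the paper actually uses is the special structure of $\Delta$: since $\delta'\lt\gamma^{\G_\rho}$, exactly one atom $\bar a\in\delta'$ is still fragmentable, so $-Q_{\delta',\delta'}=\sum_{\gamma:\,\gamma|_{\bar a}\neq\{\bar a\}}\rho_\gamma$; every such $\gamma$ also fragments the atom of $\delta$ containing $\bar a$, and because $\delta\preceq\delta'$ with $\delta\neq\delta'$ there is an additional $\gamma_1$ fragmenting an atom of $\delta$ disjoint from $\bar a$, which is exactly where the strict inequality comes from. Your sketch never isolates this single fragmentable atom, and the monotonicity you invoke would prove the opposite inequality.

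For (ii): you claim that $\tilde\delta\in\Delta$ ``precisely says the one-step fragmentations of $\tilde\delta$ exhaust into $\gamma^{\G_\rho}$''. The definition of $\Delta$ only requires that \emph{some} one-step fragmentation reaches $\gamma^{\G_\rho}$; the unique fragmentable atom $\bar a$ of $\tilde\delta$ may also be split by another $\gamma\in\G_\rho$ into a partition strictly coarser than $\gamma^{\G_\rho}|_{\bar a}$, landing in a state strictly between $\tilde\delta$ and $\gamma^{\G_\rho}$. Indeed statement (ii) is in general false for $\tilde\delta\in\Delta\setminus\V$, so an argument using only $\tilde\delta\in\Delta$ cannot work. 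The paper's proof genuinely needs $\tilde\delta\in\V$: assuming a jump to some $\delta'\notin\{\tilde\delta,\gamma^{\G_\rho}\}$, one finds $\hat\delta\in\Delta$ reachable from $\delta'$ with $-Q_{\hat\delta,\hat\delta}<-Q_{\tilde\delta,\tilde\delta}=\eta$, contradicting the maximality of $Q_{\cdot,\cdot}$ over $\Delta$ that defines $\V$. Once that exclusivity is established, your pure-death observation does give $P^t_{\tilde\delta,\tilde\delta}+P^t_{\tilde\delta,\gamma^{\G_\rho}}=1$ correctly.
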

\begin{proof}
$(i)$: Since $\delta' \lt \gamma^{\mathcal{G}_\rho}$ then there is a unique $ \bar{a} \in \delta'$ such that $\delta'|_{\bar{a}} \not = \lt \gamma^{\mathcal{G}_\rho}|_{\bar{a}}$. Also, $\delta \preceq \delta'$ implies that
\[ \delta \lt_a^\gamma \delta \Rightarrow \delta' \lt_{a'}^{\gamma} \: \: \: \forall a' \subseteq a,\]
or equivalently
\[ \delta' \not \lt_{a'}^{\gamma} \delta' \Rightarrow \delta \not \lt_a^{\gamma} \delta \: \: \: \forall a' \subseteq a \]
Moreover, $\delta \preceq \delta'$ with $\delta \not = \delta'$ implies the existence of $\gamma_1 \in \G_\rho$ such that $\delta \not \lt_{\gamma_1}^{a_1} \delta$ for all $a \subseteq a_1$. In particular $a_1 \cap \bar{a} = \emptyset$. With these relations we get that
\begin{align*} - Q_{\delta' ,\delta'} &= \sum_{\gamma \in \mathcal{G}_\rho} \sum_{\substack{ a \in \delta': \\ \delta' \not \lt_a^{\gamma} \delta'}} \rho_\gamma = \sum_{\substack{\gamma \in \mathcal{G}_\rho: \\ \delta' \not \lt_{\bar{a}}^{\gamma} \delta'}} \rho_\gamma  \\ &< \sum_{\substack{\gamma \in \mathcal{G}_\rho: \\ \delta' \not \lt_{\bar{a}}^{\gamma} \delta' }} \rho_\gamma  + \rho_{\gamma_1} \leq \sum_{\gamma \in \mathcal{G}_\rho} \sum_{\substack{ a \in \delta: \\ \delta \not \lt_a^\gamma \delta }} \rho_\gamma = -Q_{\delta,\delta}. \end{align*}
Hence $Q_{\delta, \delta} < Q_{\delta', \delta'}$.

\medskip

$(ii)$: Let us proceed by contradiction. Let $\delta \in \mathcal{V}$ and consider there is $\delta' \not = \delta$ such that $\delta \lt \delta'$, $\delta' \not = \gamma^{\G_\rho}$ and $Q_{\delta, \delta'} > 0$.  Given $\gamma^{\G_\rho}$ is an absorbent state, there must exist $\hat{\delta} \in \Delta$ such that $\mathbb{P}_{\delta'}(\tau_{\hat{\delta}} < \infty) > 0$. Note that, given that once the process exits a state it never return to it is clear that  $\delta \not = \hat{\delta}$. 

\medskip
Given this elements there is $\gamma' \in \G_\rho$ and $a_\delta \in \delta$ such that $\delta \lt_{a_\delta}^{\gamma'} \delta'$, with $\delta|_{a_\delta} \not = \gamma^{\G_\rho}|_{a_\delta}$ and $\hat{\delta} \lt_{\hat{a}}^{\gamma'} \hat{\delta}$ for all $\hat{a} \in \hat{\delta}$, $\hat{a} \subseteq a_\delta$. Also as $\delta \in \Delta$ we have that $\delta|_a = \gamma^{\G_\rho}|_a$ for all $a \in \delta$ with $a \not = a_\delta$. Same thing applies to $\hat{\delta}$, there is a unique $a_{\hat{\delta}}$ such $\hat{\delta}|_{a_{\hat{\delta}}}  \not = \gamma^{\G_\rho}|_{a_{\hat{\delta}}}$. It follows that $a_{\hat{\delta}} \subseteq a_\delta$. Hence:
\begin{align*} \eta = -Q_{\delta, \delta} &= \sum_{\gamma \in \mathcal{G}_\rho} \sum_{\substack{a  \in \delta: \\ \delta \not \lt_a^\gamma \delta}} \rho_\gamma = \sum_{\substack{\gamma \in \mathcal{G}_\rho: \\ \delta \not \lt_{a_\delta}^{\gamma} \delta}}  \rho_{\gamma}  = \rho_{\gamma'} + \sum_{\substack{\gamma \in \mathcal{G}_\rho, \gamma \not = \gamma': \\ \delta \not \lt_{a_\delta}^\gamma \delta}}  \rho_\gamma  \\  &> \sum_{\substack{\gamma \in \mathcal{G}_\rho, \gamma \not = \gamma': \\ \delta \not \lt_{a_\delta}^{\gamma} \delta}}  \rho_\gamma \geq \sum_{\substack{\gamma \in \mathcal{G}_\rho: \\ \hat{\delta} \not \lt_{a_{\hat{\delta}}}^{\gamma} \hat{\delta} }}  \rho_\gamma =  -Q_{\hat{\delta}, \hat{\delta}}, \end{align*}
which implies $- \eta = \max \{ Q_{\delta, \delta}: \delta \in \Delta\} < Q_{\hat{\delta},\hat{\delta}}$ with $\hat{\delta} \in \Delta$ which is a clear contradiction. Thus, we get that for $\tilde{\delta} \in \V, -Q_{\tilde{\delta},\tilde{\delta}} = Q_{\tilde{\delta}, \gamma^{\G_\rho}}$. Hence when starting from $\tilde{\delta}$ we have $P_{\tilde{\delta},\tilde{\delta}}^t + P_{\tilde{\delta}, \gamma^{\G_\rho}}^t = 1$ for all $t > 0$.
\end{proof}
We need a result that control the decay of the hitting time of the set $\V \cup \{\gamma^{\G_\rho} \}$. This is analogues to Lemma 4.2 in  \cite{sm}, but since we are in continuous time now we require a complete and detailed way.
\begin{lemma} \label{decay}
For all $\theta > 0$ there exists a constant $C = C(\theta)$ such that: \[ \PP(\forall u \leq t; X_u \not \in \V \cup \{ \gamma^{\G_\rho} \} ) \leq C (e^{-\beta_0} + \theta)^t. \]
\end{lemma}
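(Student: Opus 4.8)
The plan is to recognise the event in question as $\{\sigma>t\}$ for the hitting time $\sigma:=\tau_{\V\cup\{\gamma^{\G_\rho}\}}$, and then to bound $\PP(\sigma>t)$ by a tail of a sum of independent exponentials, using that the embedded jump chain can make only boundedly many jumps. First I would record two structural facts. (a) By Theorem \ref{stationary} the chain reaches $\gamma^{\G_\rho}\in\V\cup\{\gamma^{\G_\rho}\}$ almost surely, so $\sigma<\infty$ a.s. (b) Along any trajectory each jump goes to a strictly finer partition, so it strictly increases the number of atoms (if $\delta\lt_a^\gamma\delta'$ with $\delta\neq\delta'$ then $\gamma|_a$ has at least two atoms, whence $|\delta'|\ge|\delta|+1$); since partitions of $I$ have at most $|I|$ atoms and $X_0=\{I\}$ has one, the chain makes at most $N:=|I|-1$ jumps in total, and in particular it enters $\V\cup\{\gamma^{\G_\rho}\}$ within $N$ jumps.

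The key step is a stochastic domination. Let $Y_0=\{I\},Y_1,Y_2,\dots$ be the embedded jump chain, let $H_j$ be the holding time in $Y_j$, and let $K:=\min\{j:Y_j\in\V\cup\{\gamma^{\G_\rho}\}\}\le N$, so that $\sigma=\sum_{j=0}^{K-1}H_j$. For every $j<K$ the state $Y_j$ is non-absorbing and not in $\V$, hence its exit rate satisfies $-Q_{Y_j,Y_j}\ge\beta_0$ by the very definition of $\beta_0$. Conditionally on the whole embedded chain $(Y_j)_j$, the holding times are independent with $H_j$ exponential of parameter $-Q_{Y_j,Y_j}$; thus each $H_j$ with $j<K$ is stochastically dominated by an $\mathrm{Exp}(\beta_0)$ variable, and coupling these (in increasing $j$) with an i.i.d.\ family $G_0,\dots,G_{N-1}\sim\mathrm{Exp}(\beta_0)$ independent of $(Y_j)_j$ gives $\sigma=\sum_{j<K}H_j\le\sum_{j=0}^{N-1}G_j$ almost surely after the coupling (the unused $G_j$ being nonnegative). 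Integrating out the embedded chain,
\[
\PP(\sigma>t)\ \le\ \PP\!\left(\sum_{j=0}^{N-1}G_j>t\right)\ =\ e^{-\beta_0 t}\sum_{k=0}^{N-1}\frac{(\beta_0 t)^k}{k!},
\]
the last equality being the Erlang tail.

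It remains to pass from this polynomial-times-exponential bound to the stated form. Fix $\theta>0$ and write $e^{-\beta_0}+\theta=e^{-\beta_0}(1+\theta e^{\beta_0})$ with $c:=1+\theta e^{\beta_0}>1$. Since $\sum_{k=0}^{N-1}(\beta_0 t)^k/k!$ is a polynomial in $t$ while $c^t$ grows exponentially, the number
\[
C(\theta):=\sup_{t\ge 0}\ c^{-t}\sum_{k=0}^{N-1}\frac{(\beta_0 t)^k}{k!}
\]
is finite, and then $\PP(\sigma>t)\le C(\theta)\,e^{-\beta_0 t}c^{t}=C(\theta)(e^{-\beta_0}+\theta)^t$, which is the claim. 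The degenerate cases in which $\{I\}\in\V\cup\{\gamma^{\G_\rho}\}$, or in which there is no non-absorbing state outside $\V$, make the left-hand side vanish for all $t\ge0$ and are trivial.

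I expect the only delicate point to be the domination step, and in particular obtaining the \emph{sharp} rate $e^{-\beta_0}$: a naive pigeonhole/union bound over the individual holding times would only yield decay like $e^{-\beta_0 t/(N+1)}$, so one is forced to dominate the whole sum $\sigma$ at once by an Erlang variable. Making the conditioning on the embedded chain rigorous (so that the holding times become conditionally independent exponentials and $K$ becomes a constant over which the coupling is performed) is the other place requiring care; the remaining bookkeeping parallels Lemma 4.2 of \cite{sm}.
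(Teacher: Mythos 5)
Your proof is correct, and it reaches the same intermediate bound as the paper --- $e^{-\beta_0 t}$ times a polynomial in $t$ of degree at most $|I|-1$, followed by the identical final step of absorbing the polynomial into the $\theta$-slack via a constant $C(\theta)=\sup_t c^{-t}\cdot\mathrm{poly}(t)$ --- but the route to that bound is genuinely different in execution. The paper works forwards and combinatorially: it enumerates all admissible paths $(\delta_1,\dots,\delta_s)$ of the embedded chain inside $U=\Y^*(\G_\rho)\setminus(\V\cup\{\gamma^{\G_\rho}\})$ (of length at most $|I|$, for the same atom-counting reason you give), bounds the density of each jump time by $K e^{-\beta_0 t_i}$ with $K=\max_\delta(-Q_{\delta,\delta})$, and evaluates the resulting iterated integral over the simplex to get $\sum_s\sum_{\text{paths}} K^{s-1}e^{-\beta_0 t}\,t^{s-1}/(s-1)!$. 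You instead package the whole argument as a single stochastic domination: conditionally on the embedded chain the holding times before hitting $\V\cup\{\gamma^{\G_\rho}\}$ are independent exponentials of rate at least $\beta_0$, so the hitting time is dominated by an Erlang$(|I|-1,\beta_0)$ variable, whose tail is exactly $e^{-\beta_0 t}\sum_{k<|I|-1}(\beta_0 t)^k/k!$. Your version is cleaner (no enumeration of paths, no constant $K$, and a sharper polynomial), at the cost of having to justify the conditional-independence/coupling step carefully, which you correctly flag; the paper's version avoids any explicit coupling by doing the simplex integral by hand. Both handle the degenerate cases and both rely on the same two structural inputs: the definition of $\beta_0$ as the minimal exit rate off $\V\cup\{\gamma^{\G_\rho}\}$, and the fact that every jump strictly refines the partition so the chain makes boundedly many jumps.
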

\begin{proof} Let $ U = \Y(\mathcal{G}) \setminus (\mathcal{V} \cup \{ \gamma^{\G_\rho} \})$. and suppose $\Y(\mathcal{G}) \setminus (\mathcal{V} \cup \{ \gamma^{\G_\rho} \} \cup \{I\}) \not = \emptyset$ for the result to be non trivial. Fix $\delta_1 = \{ I \}$ and for every $|I| \geq s \geq 2$ consider
\[ \mathcal{C}(U,s) = \{ (\delta_1,..., \delta_s) \in U^s : \forall r \leq s-1, \delta_r \lt \delta_{r+1} \:, \: \delta_r \not = \delta_{r+1} \}. \]
Now, the event \{ $(X_u)_{t \geq u \geq 0} \mapsto (\delta_1, \delta_2 ,.., \delta_s)\}$ is defined by the existence a sequence of times $0 < t_1 < ... < t_{s-2} < t_{s-1} = t$ such that $X_0 = \delta_1, X_{t_1} = \delta_2,..., X_{t_{s-1}} = \delta_s$ and $\{X_u: u \leq t\} = \{ \delta_1,...,\delta_s \}$. Recall that we use $Q$ for the generator of the fragmentation process. Let us define $K = \max\{ -Q_{\delta,\delta} : \delta \in \Y^*(\G_\rho) \}$. Then standard techniques of continuous time Markov processes yields:
\begin{align*} &\PP(\forall u \leq t; X_u \not \in \mathcal{V} \cup \{ D^\rho \}) \\ & \leq \sum_{|I| \geq s \geq 2} \sum_{(\delta_1 ,.., \delta_s) \in \mathcal{C}(U,s)} \int_{0 \leq \sum_{i=1}^{s-1} t_i \leq t, 0 \leq t_i}  e^{-\beta_0(t - \sum_{i=1}^{s-1} t_i) } K^{s-1} \left(\prod_{i=1}^{s-1} e^{- t_i \beta_0}\right) dt_1 ,..., dt_{s-1} \\  &= \sum_{|I| \geq s \geq 2} \sum_{(\delta_1 ,.., \delta_s) \in \mathcal{C}(U,s)} \int_{0 \leq \sum_{i=1}^{s-1} t_i \leq t, 0 \leq t_i}  e^{-\beta_0(t - \sum_{i=1}^{s-1} t_i) } K^{s-1}  e^{- \sum_{i=1}^{s-1} t_i \beta_0} dt_1 ,..., dt_{s-1} \\&= \sum_{|I| \geq s \geq 2} \sum_{(\delta_1 ,.., \delta_s) \in \mathcal{C}(U,s)} \int_{0 \leq \sum_{i=1}^{s-1} t_i \leq t, 0 \leq t_i}  e^{-\beta_0 t  } K^{s-1}   dt_1 ,..., dt_{s-1} \\ &=  \sum_{|I| \geq s \geq 2} \sum_{(\delta_1 ,.., \delta_s) \in \mathcal{C}(U,s)} e^{-\beta_0 t  } K^{s-1} \frac{t^{s-1}}{(s-1)!}. \end{align*}
Take $x \in (0,1)$,  note that we have obtained: 
\[ \PP(\forall u \leq t; X_u \not \in \mathcal{V} \cup \{ \gamma^{\G_\rho} \}) \leq   \left(\frac{e^{-\beta_0}}{x}\right)^t \sum_{s \geq 2}^{|I|} K^{s-1} \sum_{(\delta_1 ,.., \delta_s) \in \mathcal{C}(U,s)} x^ t\frac{t^{s-1}}{(s-1)^1}. \]
Now, the function $\phi: \mathbb{R}_+ \rightarrow \mathbb{R}_+$ given by
\[ \phi(t) = x^t \frac{t^{s-1}}{(s-1)!}\]
is a positive function that vanishes at infinity. Then:
\[ C_1(x) = \max_{|I| \geq s \geq 2} \sup_{t \geq 0} K^{s-1} x^t \frac{t^{s-1}}{(s-1)!} < \infty. \]
So, we get
\[ \PP(\forall u \leq t: X_u \not \in \mathcal{V} \cup \{ \gamma^{\G_\rho} \} ) \leq \left( \frac{ e^{-\beta_0}}{x} \right)^t C(x) = \left( \frac{ e^{-\beta_0}}{x} \right)^t \sum_{s \leq 2}^{|I|}  \sum_{(\delta_1,..., \delta_s) \in \mathcal{C}(U,s)}  C_1(x), \]
where:
\[ C(x) = \sum_{s \geq 2}^{|I|} \sum_{(\delta_1 ,..., \delta_s) \in \mathcal{C}(U,s)} C_1 (x) < \infty. \]
By choosing $x$ such that $\frac{e^{-\beta_0}}{x} \leq e^{-\beta_0} + \theta $, and by defining $C(\theta) = C(x)$ we conclude that:
\[ \PP(\forall u \leq t: X_u \not \in \mathcal{V} \cup \{ \gamma^{\G_\rho} \} ) \leq C(\theta) (e^{-\beta_0} + \theta)^t. \]
\end{proof}
With the above lemmas we are able to prove the main result.
\begin{theorem}\label{QUASI1} We have $\eta > 0$ and $ \PP(\tau_{\V} < \infty)  > 0$. 
Also the exponential decay of $\PP(\tau >t)$ satisfies:
\[ \lim_{t \rightarrow \infty} e^{\eta t} \PP(\tau > t) = \lim_{n \rightarrow \infty} e^{ \eta t} \PP(\tau > t,  X_t \in \V) = \mathbb{E}(e^{\eta \tau_\V}, \tau_{\V} < \infty) \in (0,\infty). \]
The quasi-limiting distribution of $(X_t)_{t \geq 0}$ on $\Y^*(\G_\rho) \setminus \{ \gamma^{\G_\rho} \}$ is given by:
\begin{align}\label{qldf} \forall \delta \in \V:& \: \: \: \lim_{t \rightarrow \infty} \PP(X_t = \delta | \tau > t) = \frac{ \mathbb{E}(e^{ \eta \tau_\delta}, \tau_\delta < \infty)}{ \mathbb{E}(e^{\eta \tau_\V}, \tau_\V < \infty) }; \\ \nonumber \forall \delta \in \Y^*(\G) \setminus (\{ \gamma^{\G_\rho} \} \cup \V):& \: \: \: \: \lim_{t \rightarrow \infty} \PP(X_t = \delta | \tau > t) = 0. \end{align}
\end{theorem}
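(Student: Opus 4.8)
The plan is to reduce the statement to two inputs already at hand: the strong ``pre-absorption'' structure of $\V$ given by Lemma~\ref{FSUM}(ii) — from any $\tilde\delta\in\V$ the only available jump is to $\gamma^{\G_\rho}$, at rate $\eta$ — and the exponential control of escaping $\V\cup\{\gamma^{\G_\rho}\}$ given by Lemma~\ref{decay}, which becomes useful once $\beta_0>\eta$ is known.

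I would first dispatch the elementary points. Since $\PP(\tau<\infty)=1$ by Theorem~\ref{stationary} and $\{I\}\neq\gamma^{\G_\rho}$, the state visited just before absorption lies in $\Delta$, so $\Delta\neq\emptyset$; as every $\delta\in\Delta$ is non-absorbing, $\eta>0$. Unwinding the recursive definition of $\Y^*(\G_\rho)$, every state is reached from $\{I\}$ along a finite chain of strict fragmentations, each carrying a positive $Q$-entry, so $\PP(\tau_\delta<\infty)>0$ for every $\delta$ and in particular $\PP(\tau_\V<\infty)>0$. For the key inequality $\beta_0>\eta$: if $\delta\neq\gamma^{\G_\rho}$ and $\delta\notin\Delta$, then $\delta$ can reach some $\delta'\in\Delta$ (again because $\gamma^{\G_\rho}$ is the unique absorbing state), so Lemma~\ref{FSUM}(i) gives $-Q_{\delta,\delta}>-Q_{\delta',\delta'}\geq\eta$; and if $\delta\in\Delta\setminus\V$ then $-Q_{\delta,\delta}>\eta$ by the definition of $\V$. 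Minimizing over the finite set of non-absorbing states outside $\V$ gives $\beta_0>\eta$, so we may fix $\theta>0$ in Lemma~\ref{decay} with $e^{-\beta_0}+\theta<e^{-\eta}$.

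Next comes the structural reduction. By Lemma~\ref{FSUM}(ii), once the chain enters $\V$ it stays at the entry state for an $\mathrm{Exp}(\eta)$ holding time and then jumps directly to $\gamma^{\G_\rho}$; hence the chain meets $\V$ at most once, $\{\tau_\V<\infty\}$ is the disjoint union of the events $\{\tau_\delta<\infty\}$, $\delta\in\V$ (with $\tau_\delta=\tau_\V$ on each), and on $\{\tau_\V\le t\}$ one has $X_t\in\V\cup\{\gamma^{\G_\rho}\}$ with $\{X_t\in\V\}=\{\tau>t\}$. The strong Markov property at $\tau_\delta$ then yields, for $\delta\in\V$,
\[
\PP(X_t=\delta)=\PP(\tau_\delta\le t<\tau)=\EE\!\left[\mathbf{1}_{\{\tau_\delta\le t\}}\,e^{-\eta(t-\tau_\delta)}\right]=e^{-\eta t}\,\EE\!\left[\mathbf{1}_{\{\tau_\delta\le t\}}\,e^{\eta\tau_\delta}\right],
\]
and summing over $\delta\in\V$, $\PP(\tau>t,\,X_t\in\V)=e^{-\eta t}\EE[\mathbf{1}_{\{\tau_\V\le t\}}e^{\eta\tau_\V}]$. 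Monotone convergence sends $e^{\eta t}\PP(\tau>t,X_t\in\V)\to\EE[e^{\eta\tau_\V},\tau_\V<\infty]$ and $e^{\eta t}\PP(X_t=\delta)\to\EE[e^{\eta\tau_\delta},\tau_\delta<\infty]$.

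To close, I would use Lemma~\ref{decay} both for finiteness and for the leftover terms. On $\{\tau_\delta<\infty\}$ with $\delta\in\V$ the chain avoids $\V\cup\{\gamma^{\G_\rho}\}$ strictly before $\tau_\delta$, so $\{t<\tau_\delta<\infty\}\subseteq\{\forall u\le t:\ X_u\notin\V\cup\{\gamma^{\G_\rho}\}\}$; by the choice of $\theta$ this has probability $O\big((e^{-\beta_0}+\theta)^t\big)$ with $e^{-\beta_0}+\theta<e^{-\eta}$, so $\EE[e^{\eta\tau_\V},\tau_\V<\infty]=\PP(\tau_\V<\infty)+\int_0^\infty\eta e^{\eta s}\PP(s<\tau_\V<\infty)\,ds$ is finite, and positive since $\PP(\tau_\V<\infty)>0$. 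The same inclusion bounds $\PP(\tau>t,\tau_\V>t)$, whence $e^{\eta t}\PP(\tau>t)=e^{\eta t}\PP(\tau>t,\tau_\V\le t)+o(1)\to\EE[e^{\eta\tau_\V},\tau_\V<\infty]\in(0,\infty)$, giving the asserted chain of equalities. For the quasi-limiting distribution, dividing the two displayed limits gives, for $\delta\in\V$, $\PP(X_t=\delta\mid\tau>t)\to\EE[e^{\eta\tau_\delta},\tau_\delta<\infty]/\EE[e^{\eta\tau_\V},\tau_\V<\infty]$ (numerators summing to the denominator), while for $\delta\notin\V\cup\{\gamma^{\G_\rho}\}$ the event $\{X_t=\delta\}$ forces the chain to have avoided $\V\cup\{\gamma^{\G_\rho}\}$ on $[0,t]$, so $\PP(X_t=\delta)=O\big((e^{-\beta_0}+\theta)^t\big)$ and the conditional probability tends to $0$. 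I expect the main obstacle to be the bookkeeping behind $\beta_0>\eta$ — in particular the observation that every non-absorbing state can reach $\Delta$, which is what makes Lemma~\ref{FSUM}(i) bite — together with making the strong-Markov decomposition at $\tau_\V$ fully rigorous in continuous time; everything downstream is routine once Lemma~\ref{decay} supplies the gap.
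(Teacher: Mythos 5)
Your proposal is correct and follows the same overall route as the paper: establish $\eta<\beta_0$ via Lemma~\ref{FSUM}(i), kill the contribution of states outside $\V$ with Lemma~\ref{decay}, and apply the strong Markov property at the first hitting time of each $\delta\in\V$, where Lemma~\ref{FSUM}(ii) turns the residual evolution into a pure $\mathrm{Exp}(\eta)$ holding time and yields $\PP(\tau>t,\,X_t=\delta)=e^{-\eta t}\,\EE\bigl[e^{\eta\tau_\delta}\mathbf{1}_{\{\tau_\delta\le t\}}\bigr]$. The one place where you genuinely diverge is the proof that $\EE[e^{\eta\tau_\delta},\tau_\delta<\infty]$ is finite: the paper obtains it by controlling the density of $\tau_\delta$ through the forward expansion $\PP(s<\tau_\delta\le s+h)\le o(h)+M_\delta h(e^{-\beta_0}+\theta)^s$ followed by a Riemann-sum passage to the limit, whereas you use the inclusion $\{t<\tau_\delta<\infty\}\subseteq\{\forall u\le t:\,X_u\notin\V\cup\{\gamma^{\G_\rho}\}\}$ together with the identity $\EE[e^{\eta\sigma};\sigma<\infty]=\PP(\sigma<\infty)+\int_0^\infty\eta e^{\eta s}\PP(s<\sigma<\infty)\,ds$, and let monotone convergence handle the limit $t\to\infty$. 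Your version is cleaner: it avoids any discussion of whether $\tau_\delta$ admits a density and replaces the $o(h)$ bookkeeping by a single tail estimate, while resting on exactly the same input (Lemma~\ref{decay} with $\theta<e^{-\eta}-e^{-\beta_0}$). The remaining pieces --- the lower bound $\PP(\tau>t)\ge\alpha(\V)e^{-\eta t}$ implicit in your positivity claim, the disjointness of $\{\tau_\delta<\infty\}$ over $\delta\in\V$, and the vanishing of $\PP(X_t=\delta\mid\tau>t)$ off $\V$ --- match the paper's argument, so I see no gap.
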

\begin{proof}
The result is shown in a very similar way as done on the main theorem of \cite{sm}. The fact that $\eta > 0$ and $ \PP(\tau_\V < \infty) > 0$ are proven on the same way.
\medskip

We claim that $\eta < \beta_0$. First, take $\delta \in \Delta \setminus \mathcal{V}$. By definition of $\eta$ we get
\[   \eta = \min\{ -Q_{\delta',\delta'}: \delta' \in \mathcal{V} \} < -Q_{\delta,\delta}. \]
Now take $\delta \not \in \Delta$. Then, given that $\gamma^{\G_\rho}$ is the absorbing state, there is a path on $\Y(\mathcal{G}_\rho)$; $\delta \lt \delta_1 \lt ... \lt \delta_r$ such $\delta_r \in \Delta$, and every $\delta_i$ is different. Then, $\PP_{\delta} (\tau_{\delta_r}  <\infty) > 0$. Given Lemma \ref{FSUM} $(i)$,
\[  Q_{\delta, \delta} < Q_{\delta_r,\delta_r} \leq -\eta \: \: \: \text{so} \: \: \:  \eta < - Q_{\delta,\delta}. \]
By taking the maximum on $\delta$ such that $\delta \in \Delta \setminus \V$ or $\delta \in \Delta$ we get $\eta < \beta_0$. This shows the claim.
Now we study:
\[ \PP( \tau > t) = \PP(\tau >t, X_t \not \in \mathcal{V}) + \PP(\tau > t, X_t \in \mathcal{V} ). \]
Since there is a path with positive probability from $\{I \}$ to every $\delta \in \Y(\mathcal{G}_\rho)$, $\delta \not = \{ I \}$ there for every $t_0 > 0$ we have:
\[ \forall \delta \in \mathcal{V}: \PP(\tau_\delta \leq t_0) > 0. \]
Consider, for a fixed $t_0>0$, $\alpha(\mathcal{V}) := min\{ \PP(\tau_\delta < t_0): \delta \in \mathcal{V} \} > 0$. From the Markov property, for every $\delta^* \in \mathcal{V}$ and $t > t_0$ we have:
\begin{align*} \PP(\tau > t) &\geq \int_{0}^{t_0} \PP(\tau > t | \tau_{\delta^*} = s) d \PP( \tau_{\delta^*} = s) \\ &= \int_{0}^{t_0} \PP_{\delta*} (\tau > t - s) d \PP(\tau_{\delta^*} = s) \\ &= \int_{0}^{t_0} e^{-\eta(t-s)} d \PP(\tau_{\delta^*} = s) \\ &\geq e^{-\eta t} \int_{0}^{t_0} d \PP(\tau_{\delta^*} = s) \\ &= e^{-\eta t} \PP(\tau_{\delta^*} < t_0) \geq e^{-\eta t} \alpha(\mathcal{V}), \end{align*}
then:
\[ \lim_{t \rightarrow \infty} \PP(\tau > t) e^{\eta t} \geq \alpha(\mathcal{V}) > 0. \]
This result together with Lemma \ref{decay}, in which we take $0 < \theta < e^{-\eta} - e^{-\beta_0}$, gives
\[ \PP(X_t \not \in \mathcal{V} | \tau > t) = \frac{ \PP( X_t \not \in \mathcal{V}, \tau > t  ) }{\PP(\tau > t)} \leq \frac{C}{\alpha(\mathcal{V})} \left(\frac{ e^{-\beta_0} + \theta }{ e^{-\eta} } \right)^t \xrightarrow[t \rightarrow 0]{} 0. \]
Then:
\[ \lim_{t \rightarrow \infty} \PP(X_t \in \mathcal{V} | \tau > t) = 1. \]
Now, let $\delta \in \mathcal{V}$. From the Markov property we get
\begin{align*} \PP( \tau > t, X_t = \delta) &= \int_{0}^{t} \PP( \tau > t, X_t = \delta | \tau_{\delta} = s) d \PP( \tau_{\delta} = s ) \\ &= \int_{0}^{t} \PP(\tau > t | \tau_\delta = s) d \PP(\tau_\delta = s)\\ &= \int_{0}^{t} \PP_\delta (\tau > t-s) d \PP(\tau_{\delta} = s) \\ &= \int_{0}^{t} \PP_\delta (X_{t-s} = \delta) d \PP(\tau_\delta = s ) \\ &= e^{-t \eta} \int_{0}^{t} e^{s \eta} d \PP(\tau_{\delta} = s). \end{align*}
We wish to have a control of the density $d \PP(\tau_\delta = s)$. By expanding forwardly the change of the Markov chain we obtain
\begin{align*} \PP(s < \tau_\delta \leq s + h) &= \sum_{\delta' \in \Y^*(\mathcal{G}_\rho)} \PP(s < \tau_\delta \leq s + h | X_s = \delta') \PP(X_s = \delta') \\ &= \sum_{\delta' \in \Y^*(\mathcal{G}_\rho)} \PP_{\delta'} (0< \tau_\delta \leq h) \PP(X_s = \delta')\\ &= o(h) + \sum_{\substack{\delta' \in \Y^*(\mathcal{G}_\rho):\\ \delta' \lt \delta, \delta' \not = \delta }} \PP_{\delta'}(\tau_\delta \leq h) \PP(X_s = \delta')\\ &= o(h) + \sum_{\substack{\delta' \in \Y^*(\mathcal{G}_\rho):\\ \delta' \lt \delta, \delta' \not = \delta  }} \PP_{\delta'}(X_h = \delta) \PP(X_s = \delta') \\ &= o(h) + \sum_{\substack{\delta' \in \Y^*(\mathcal{G}_\rho):\\ \delta' \lt \delta, \delta' \not = \delta }} (q_{\delta', \delta} h + o(h)) \PP(X_s = \delta'). \end{align*}
Put $M_\delta = \max\{ q_{\delta',\delta}: \delta' \lt \delta \}$, then we have proven:
\begin{align*}
\PP(s < \tau_\delta \leq s + h) &\leq o(h) + M_\delta h \mathbb{P}\left(\bigcup_{\substack{ \delta' \in \Y^*(\G_\rho): \\ \delta' \lt \delta, \delta' \not = \delta }} \{ X_s = \delta' \}\right) \\ & \leq o(h) + M_\delta h \PP(\forall u \leq s; X_u \not \in \V \cup \gamma^{\G_\rho}),
\end{align*} 
where in the last step we used the equality of Lemma \ref{FSUM} $(ii)$: $P_{\delta,\delta}^t  + P_{\delta,\gamma^{\G_\rho}}^t = 1$ for $\delta \in \mathcal{V}$. Hence choosing $\theta < e^{-\eta} - e^{-\beta_0}$ in Lemma \ref{decay} we obtain,
\[ \PP(s < \tau_\delta \leq s + h) \leq o(h) + M_\delta h (e^{-\beta_0}
+ \theta)^s. \]
Now, take $h > 0$,
\begin{align*}
\int_0^t e^{s \eta} d \mathbb{P}(\tau_\delta = s) &\leq \sum_{i=0}^{\lfloor \frac{t}{h} \rfloor - 1 } e^{h(i+1)\eta} \PP(i h < \tau_\delta \leq h(i+1)) \\ &\leq \sum_{i=0}^{\lfloor \frac{t}{h} \rfloor - 1 } e^{h(i+1)\eta} (o(h) + M_\delta h (e^{-\beta_0}+\theta)^{i h} \\ &= \frac{o(h)}{h}\sum_{i=0}^{\lfloor \frac{t}{h} \rfloor - 1 } e^{h(i+1)\eta} h + M_\delta h  e^{h \eta} \sum_{i=0}^{\lfloor \frac{t}{h} \rfloor - 1 }  \frac{(e^{-\beta_0}-\theta)^{i h}}{e^{i h \eta} } \\ & \leq \frac{o(h)}{h}\sum_{i=0}^{\lfloor \frac{t}{h} \rfloor - 1 } e^{h(i+1)\eta} h + M_\delta h  e^{h \eta} \frac{1}{1-\xi^h},
\end{align*}
where $\xi = \frac{(e^{-\beta_0}-\theta)}{e^{\eta}} \in (0,1)$. Now let $h \rightarrow 0^+$. With that we get
\[ \lim_{h \rightarrow 0^+} \sum_{i=0}^{\lfloor \frac{t}{h} \rfloor - 1 } e^{h(i+1)\eta} h = \int_{0}^{t} e^{s \eta} ds < \infty, \]
and so
\[ \lim_{h \rightarrow 0^+} \frac{o(h)}{h}\sum_{i=0}^{\lfloor \frac{t}{h} \rfloor - 1 } e^{h(i+1)\eta} h = 0. \]
Therefore, we have obtained the bound:
\begin{align*}
\int_0^t e^{s \eta} d \mathbb{P}(\tau_\delta = s) \leq \lim_{h \rightarrow 0} M_\delta e^{h \eta} \frac{h}{1-e^{h \log(\xi)}} = -M_\delta \frac{1}{\log(\xi)} < \infty.  \\ 
\end{align*}
This is uniform on $t$. So,
\begin{equation}\label{LIM1} \lim_{t \rightarrow \infty} e^{t \eta} \PP(\tau > t, X_t = \delta) = \int_{0}^{\infty} e^{s \eta} d \PP(\tau_\delta = s) = \mathbb{E}(e^{\eta \tau_\delta}, \tau_\delta < \infty) < \infty. \end{equation}
Given that by Lemma \ref{FSUM} $(ii)$, $P_{\delta,\delta}^t +  P_{\delta,\gamma^{\G_{\rho}}}^t = 1$ for all $\delta \in \mathcal{V}, t \geq 0$ it follows that $ \tau_{\delta} < \infty \Rightarrow \tau_\mathcal{V} = \tau_\delta$. Therefore:
\[ e^{ \eta \tau_\mathcal{V} } 1_{\tau_\mathcal{V} < \infty} = \sum_{\delta \in \mathcal{V}} e^{\eta \tau_\delta} 1_{\tau_\delta < \infty}. \]
So using equation (\ref{LIM1}):
\[ \mathbb{E}(e^{ \eta \tau_\mathcal{V}}, \tau_\mathcal{V} < \infty) = \sum_{\delta \in \mathcal{V}} \mathbb{E}(e^{\eta \tau_\delta}, \tau_\delta < \infty ) < \infty. \]
And so we get:
\begin{align} \label{LIM2}
\nonumber \lim_{t \rightarrow \infty} e^{ \eta t} \mathbb{P}(\tau > t, X_t \in \mathcal{V}) &= \lim_{t \rightarrow \infty}  \sum_{\delta \in \mathcal{V}} e^{\eta t} \mathbb{P}(\tau > t, X_t =\delta ) \\ &= \sum_{\delta \in \mathcal{V}} \mathbb{E}(e^{\eta \tau_\delta}, \tau_\delta < \infty ) = \mathbb{E}(e^{\eta \tau_\mathcal{V}}, \tau_\mathcal{V} < \infty). \end{align}
Finally identity (\ref{qldf}) follows directly from (\ref{LIM1}) and (\ref{LIM2}). This finishes the proof of the Theorem.
\end{proof}

Theorems \ref{RECSOL} and \ref{QUASI1} have a nice consequence for an approximation of solution of equation (\ref{EDOREC}). We recall the notation $o(e^{-t \eta})$ for a reminder that fulfills $\lim_{t \rightarrow \infty } \frac{|| o(e^{-t \eta})||}{e^{-t \eta}} = 0$.

\begin{theorem} \label{aproximation}
We have the following approximation for $\Xi_t \mu$:
\[ \Xi_t \mu = e^{-t \eta} \left[ (e^{t\eta}- \mathbb{E}(e^{\eta \tau_{\V}},\tau_\V < \infty))\bar{\mu} + (\sum_{\delta \in \V} \mathbb{E}(e^{\eta \tau_\delta}, \tau_\delta < \infty) \bigotimes_{L \in \delta} \mu_L)  \right] + o(e^{-t \eta}) \]
\end{theorem}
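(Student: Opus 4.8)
The plan is to expand $\Xi_t\mu$ via Theorem \ref{RECSOL}, isolate the terms indexed by the states in $\V$ (which carry the $e^{-\eta t}$--rate behaviour), and absorb everything else into the remainder using Lemma \ref{decay}. Concretely, first I would use Theorem \ref{RECSOL} together with the facts that $\gamma^{\G_\rho}$ is absorbing (so $\{X_t=\gamma^{\G_\rho}\}=\{\tau\le t\}$) and $R_{\gamma^{\G_\rho}}(\mu)=\bar\mu$, and that the chain never returns to a state it has left (so $\{X_t=\delta\}\subseteq\{\tau>t\}$ for every $\delta\ne\gamma^{\G_\rho}$, whence $\PP(X_t=\delta)=\PP(X_t=\delta,\tau>t)$), to write
\[
\Xi_t\mu=\PP(\tau\le t)\,\bar\mu+\sum_{\delta\in\V}\PP(X_t=\delta,\tau>t)\,R_\delta(\mu)+\sum_{\delta\in\Y^*(\G_\rho)\setminus(\V\cup\{\gamma^{\G_\rho}\})}\PP(X_t=\delta,\tau>t)\,R_\delta(\mu).
\]

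Next I would estimate each block. For the first, $\PP(\tau\le t)=1-\PP(\tau>t)$ and Theorem \ref{QUASI1} gives $\PP(\tau>t)=e^{-\eta t}\EE(e^{\eta\tau_\V},\tau_\V<\infty)+o(e^{-\eta t})$. For $\delta\in\V$, equation (\ref{LIM1}) in the proof of Theorem \ref{QUASI1} gives $\PP(X_t=\delta,\tau>t)=e^{-\eta t}\EE(e^{\eta\tau_\delta},\tau_\delta<\infty)+o(e^{-\eta t})$. For $\delta\notin\V\cup\{\gamma^{\G_\rho}\}$, I would argue that $\{X_t=\delta\}\subseteq\{\forall u\le t:\,X_u\notin\V\cup\{\gamma^{\G_\rho}\}\}$: by Lemma \ref{FSUM}$(ii)$ a state of $\V$ can only jump to $\gamma^{\G_\rho}$, which is absorbing, so if $X_u\in\V\cup\{\gamma^{\G_\rho}\}$ for some $u\le t$ then $X_t\in\V\cup\{\gamma^{\G_\rho}\}$, contradicting $X_t=\delta$. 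Hence, applying Lemma \ref{decay} with $\theta$ so small that $e^{-\beta_0}+\theta<e^{-\eta}$ (possible since $\eta<\beta_0$, established in the proof of Theorem \ref{QUASI1}), one gets $\PP(X_t=\delta,\tau>t)\le\PP(X_t=\delta)\le C(\theta)(e^{-\beta_0}+\theta)^t=o(e^{-\eta t})$.

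Finally I would substitute these three estimates and collect terms: since $\Y^*(\G_\rho)$ is finite and each $R_\delta(\mu)$ is a probability measure (so of bounded total variation), all the $o(e^{-\eta t})$ contributions, each multiplied by a fixed measure and summed over finitely many states, merge into a single $o(e^{-\eta t})$; grouping the $\bar\mu$ parts as $\bigl(1-e^{-\eta t}\EE(e^{\eta\tau_\V},\tau_\V<\infty)\bigr)\bar\mu=e^{-\eta t}\bigl(e^{\eta t}-\EE(e^{\eta\tau_\V},\tau_\V<\infty)\bigr)\bar\mu$ and factoring $e^{-\eta t}$ out of the $\V$--sum yields exactly the asserted formula, recalling $R_\delta(\mu)=\bigotimes_{L\in\delta}\mu_L$. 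The only genuinely delicate points are the containment $\{X_t=\delta\}\subseteq\{\forall u\le t:\,X_u\notin\V\cup\{\gamma^{\G_\rho}\}\}$ (which is where Lemma \ref{FSUM}$(ii)$ and the non--return property are essential) and the bookkeeping of the remainder in the total--variation norm; the rest is a direct assembly of Theorems \ref{RECSOL} and \ref{QUASI1} and Lemma \ref{decay}.
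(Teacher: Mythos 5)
Your proposal is correct and follows essentially the same route as the paper's proof: decompose $\Xi_t\mu$ via Theorem \ref{RECSOL} into the absorbed part, the $\V$ part handled by (\ref{LIM1}), and a remainder controlled by Lemma \ref{decay}, then collect terms in total variation. In fact you make explicit a step the paper leaves implicit, namely the containment $\{X_t=\hat\delta\}\subseteq\{\forall u\le t:\,X_u\notin\V\cup\{\gamma^{\G_\rho}\}\}$ via Lemma \ref{FSUM}$(ii)$, which is exactly the right justification for invoking Lemma \ref{decay} there.
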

\begin{proof}
Note that, by Theorems \ref{RECSOL} and \ref{QUASI1}:
\begin{align*}
\Xi_t \mu &= \mathbb{E}(\mu^{(t)}) = \mathbb{E}(\mu^{(t)}, \tau \leq t) + \mathbb{E}(\mu^{(t)}, \tau > t) \\ &= \bar{\mu} \mathbb{P}(\tau \leq t) + \sum_{\delta \in \Y^*(\G_\rho) \setminus \{ \gamma^{\G_\rho}\}} \mathbb{P}(X_t = \delta , \tau > t)  \bigotimes_{L \in \delta} \mu_L \\ &= \bar{\mu} (1-e^{-\eta t} \mathbb{E}(e^{\eta \tau_\V}, \tau_\V < \infty)) + \sum_{\delta \in \Y^*(\G_\rho) \setminus \{ \gamma^{\G_\rho}\}} \mathbb{P}(X_t = \delta , \tau > t)  \bigotimes_{L \in \delta} \mu_L + o(e^{-t \eta}).
\end{align*}
So, since $||\cdot||$ fulfills the triangular inequality, we just need to prove that, for all $\hat{\delta} \not \in \V \cup \{ \gamma^{\G_\rho} \}$ and $\delta \in \V$:
\begin{align}
&\mathbb{P}(X_t = \hat{\delta}) e^{\eta t} \xrightarrow{t \rightarrow \infty} 0,  \label{tf1} \\
 & \frac{1}{e^{-t \eta}} \left| (\mathbb{E}(e^{\eta \tau_\delta}, \tau_\delta < \infty) e^{- t \eta}  - \mathbb{P}(X_t = \delta, \tau > t) ) \right| \xrightarrow{t \rightarrow \infty} 0. \label{tf2} 
\end{align}
Indeed, (\ref{tf1}) follows from Lemma \ref{decay}, and (\ref{tf2}) follows from (\ref{LIM1}).
\end{proof}
Finally, Theorem \ref{QUASI1} has the following consequences for the ratio limits and the $Q$-process, the latter is the Markov chain that avoids hitting the absorbing state. We will avoid the proof because it is entirely similar to Corollary 4.4 in \cite{sm}.
\begin{theorem} \label{QUASI2}

\begin{enumerate}[label={(\roman*)}]
\item For all $\delta \in \Y^*(\G) \setminus \{ \gamma^{\G_\rho} \}$ the following ratio is well defined:
\[ \lim_{t \rightarrow \infty} \frac{\PP_\delta (\tau > t)}{\PP(\tau > t)} = \frac{ \mathbb{E}_\delta (e^{\eta \tau_\V}, \tau_\V < \infty)}{\mathbb{E} (e^{\eta \tau_\V}, \tau_\V < \infty)},\]
and both expressions vanish when $\PP_\delta (\tau_{\V} < \infty) = 0$. 
\item For every $t > 0$ the vector $\varphi = (\varphi_\delta)_{\delta \in \Y^*(\G) \setminus \{ \gamma^{\G_\rho} \}} $ given by
\[ \varphi_\delta = \mathbb{E}_\delta (e^{\eta \tau_\V}, \tau_\V < \infty ), \]
is a right eigenvector of the restricted semi-group $(P^t)^* = P^t|_{\Y^*(\G) \setminus \{ \gamma^{\G_\rho} \}}$ with eigenvalue $e^{-\eta t}$.
\item For all $\{ \delta_i \}_{i=1}^{k} \subseteq \Y^*(\G) \setminus \{ \gamma^{\G_\rho} \}$ the following limit exists:
\[ \lim_{t \rightarrow \infty} \PP( X_{t_1}=\delta_1 , X_{t_2} = \delta_2,..., X_{t_k} = \delta_k \, | \, \tau > t ), \]
and defines a Markov process on $\partial_\V := \{ \delta: \PP_{\delta}(\tau_\V < \infty) >0 \}$, the states from which the process can arrive to $\V$, with generator:
\vspace{- 3 mm} \begin{align*} \widehat{Q}_{\delta, \delta'} &= Q_{\delta, \delta'} \frac{\varphi_{\delta'}}{\varphi_\delta} = Q_{\delta, \delta'} \frac{\mathbb{E}_\delta(e^{\eta \tau_\V}, \tau_\V < \infty)}{\mathbb{E}_{\delta'}(e^{\eta \tau_\V}, \tau_\V < \infty)} \: \: \: \delta, \delta' \in \partial_\V, \delta \not = \delta', \\
\widehat{Q}_{\delta,\delta} &= \eta + Q_{\delta,\delta} \: \: \: \delta \in \partial_\V.\end{align*}
\end{enumerate}
\end{theorem}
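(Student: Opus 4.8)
The plan is to deduce all three parts from the single asymptotic statement of Theorem~\ref{QUASI1}, after upgrading it to an arbitrary starting state. So the first step is to record that for every $\delta \in \Y^*(\G_\rho)\setminus\{\gamma^{\G_\rho}\}$ one has
\[ \lim_{t\to\infty} e^{\eta t}\,\PP_\delta(\tau>t) \;=\; \EE_\delta(e^{\eta\tau_\V},\tau_\V<\infty)\;=:\;\varphi_\delta\;\in\;[0,\infty). \]
If $\PP_\delta(\tau_\V<\infty)>0$, the proof of Theorem~\ref{QUASI1} goes through verbatim with $\{I\}$ replaced by $\delta$: there is still a positive-probability path from $\delta$ to every state of $\V$ reachable from $\delta$, the lower bound $\PP_\delta(\tau>t)\ge e^{-\eta t}\alpha$ is obtained as before, and the contribution of $\{X_t\notin\V\}$ still decays strictly faster than $e^{-\eta t}$ by Lemma~\ref{decay} (recall $\eta<\beta_0$). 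If $\PP_\delta(\tau_\V<\infty)=0$, then almost surely the process started at $\delta$ never visits $\V$, so $\{\tau>t\}\subseteq\{\forall u\le t:\ X_u\notin\V\cup\{\gamma^{\G_\rho}\}\}$ up to a null set, and Lemma~\ref{decay} with $\theta<e^{-\eta}-e^{-\beta_0}$ forces $e^{\eta t}\PP_\delta(\tau>t)\to0=\varphi_\delta$. Throughout we also set $\varphi_{\gamma^{\G_\rho}}:=0$, consistently with this formula.

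Part~(i) is then immediate: write $\PP_\delta(\tau>t)/\PP(\tau>t) = \bigl(e^{\eta t}\PP_\delta(\tau>t)\bigr)/\bigl(e^{\eta t}\PP(\tau>t)\bigr)$ and let $t\to\infty$, using the first step in the numerator and Theorem~\ref{QUASI1}, which gives that the denominator tends to $\varphi_{\{I\}}=\EE(e^{\eta\tau_\V},\tau_\V<\infty)\in(0,\infty)$; when $\PP_\delta(\tau_\V<\infty)=0$ both sides vanish. For Part~(ii), fix $\delta\ne\gamma^{\G_\rho}$ and $t>0$, and condition on $X_t$ to obtain, for every $s>0$,
\[ \sum_{\delta'\ne\gamma^{\G_\rho}} P^t_{\delta,\delta'}\,\PP_{\delta'}(\tau>s)\;=\;\PP_\delta(\tau>t+s), \]
using that $X_t\ne\gamma^{\G_\rho}$ is automatic on $\{\tau>t+s\}$ and that $\gamma^{\G_\rho}$ is absorbing. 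Multiplying by $e^{\eta s}$, passing the limit $s\to\infty$ inside the finite sum, and applying the first step to each term and to the right-hand side yields $\sum_{\delta'}P^t_{\delta,\delta'}\varphi_{\delta'}=e^{-\eta t}\varphi_\delta$, i.e.\ $(P^t)^*\varphi=e^{-\eta t}\varphi$. Differentiating this identity at $t=0$ (the finite-state semigroup is smooth) gives the auxiliary identity $\sum_{\delta'\ne\gamma^{\G_\rho}}Q_{\delta,\delta'}\varphi_{\delta'}=-\eta\varphi_\delta$.

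For Part~(iii), order $t_1<\dots<t_k$ and set $A=\{X_{t_1}=\delta_1,\dots,X_{t_k}=\delta_k\}$ with all $\delta_i\in\Y^*(\G_\rho)\setminus\{\gamma^{\G_\rho}\}$. Since $\gamma^{\G_\rho}$ is absorbing, $A\subseteq\{\tau>t_k\}$, so the Markov property at $t_k$ gives $\PP(A,\tau>t)=\PP(A)\,\PP_{\delta_k}(\tau>t-t_k)$ for $t>t_k$, hence
\[ \PP(A\mid\tau>t)\;=\;\PP(A)\,e^{\eta t_k}\,\frac{e^{\eta(t-t_k)}\PP_{\delta_k}(\tau>t-t_k)}{e^{\eta t}\PP(\tau>t)}\;\xrightarrow{\,t\to\infty\,}\;\PP(A)\,e^{\eta t_k}\,\frac{\varphi_{\delta_k}}{\varphi_{\{I\}}}, \]
by the first step and Theorem~\ref{QUASI1}; in particular the limit vanishes unless $\delta_k\in\partial_\V$. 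Expanding $\PP(A)$ as a product of transition probabilities and telescoping, the right-hand side is exactly the finite-dimensional distribution, at times $t_1,\dots,t_k$, started at $\{I\}$, of the Doob $h$-transform of the sub-Markovian semigroup $(P^t)^*$ by the function $\varphi$, which has semigroup $\widehat{P}^t_{\delta,\delta'}=e^{\eta t}(P^t)^*_{\delta,\delta'}\varphi_{\delta'}/\varphi_\delta$ on $\partial_\V$. Part~(ii) shows each $\widehat{P}^t$ is stochastic, and these finite-dimensional distributions are consistent by construction, so Kolmogorov's extension theorem produces the limiting Markov process. Differentiating $\widehat{P}^t$ at $t=0$ gives $\widehat{Q}_{\delta,\delta'}=Q_{\delta,\delta'}\varphi_{\delta'}/\varphi_\delta$ for $\delta\ne\delta'$ and $\widehat{Q}_{\delta,\delta}=\eta+Q_{\delta,\delta}$; the auxiliary identity from Part~(ii) gives $\sum_{\delta'\ne\delta}\widehat{Q}_{\delta,\delta'}=-\eta-Q_{\delta,\delta}=-\widehat{Q}_{\delta,\delta}$, so $\widehat{Q}$ is conservative, and since $\varphi_{\delta'}=0$ off $\partial_\V$ every transition leaving $\partial_\V$ is killed, so the process indeed lives on $\partial_\V$.

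The only genuine work is the first step: transcribing the argument of Theorem~\ref{QUASI1} to a general starting state, together with the separate treatment of the states with $\PP_\delta(\tau_\V<\infty)=0$. Everything after that is routine Doob $h$-transform bookkeeping. Since this is precisely the continuous-time analogue of Corollary~4.4 in \cite{sm}, one may, as the authors do, invoke that reference rather than reproduce the computation.
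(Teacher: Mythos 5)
Your proposal is correct and follows exactly the route the paper intends: the paper omits this proof, deferring to Corollary~4.4 of \cite{sm}, and your argument is precisely the continuous-time transcription of that scheme --- extend Theorem~\ref{QUASI1} to an arbitrary initial state (treating the case $\PP_\delta(\tau_\V<\infty)=0$ via Lemma~\ref{decay}), then read off (i), the eigenvector identity (ii) from the Chapman--Kolmogorov relation, and (iii) as the Doob $h$-transform by $\varphi$. The only implicit ingredient is that Lemma~\ref{decay} holds verbatim for an arbitrary starting state, which is clear since its path-counting proof never uses that $X_0=\{I\}$.
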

\begin{remark}
This result can be interpreted as follows; when conditioned to not hitting the absorbing state, the process arrives after a long time to a state on $\V$. Moreover one can compute the probability of arriving to some $\delta \in \V$ by using formula (\ref{qldf}). 
\end{remark}
\medskip
\noindent{\bf Acknowledgments}. We are thankful 
for support from the CMM Basal CONICYT Project AFB-170001. We are deeply grateful for discussions with Professor Ellen Baake, from University of Bielefeld, along all this work. Her comments contributed to improve significantly the final version of this manuscript.

\newpage 

\printbibliography

\smallskip

\noindent{ IAN LETTER, SERVET MART\'INEZ. } \\
\noindent{\it Departamento Ingenier{\'\i}a Matem\'atica and Centro
Modelamiento Matem\'atico, Universidad de Chile,
UMI 2807 CNRS, Casilla 170-3, Correo 3, Santiago, Chile.}
e-mail: iletter@dim.uchile.cl, smartine@dim.uchile.cl.

\end{document}